\numberwithin{equation}{section}
\newtheorem{theorem}{Theorem}[section]
\newtheorem{lemma}[theorem]{Lemma}
\newtheorem{proposition}[theorem]{Proposition}
\theoremstyle{definition}
\newtheorem{definition}[theorem]{Definition}
\theoremstyle{remark}
\newcommand{\Div}{\operatorname{div}}
\newcommand{\Grad}{\nabla}
\newcommand{\vr}{\varrho}
\newcommand{\dx}{{\rm d}x}
\newcommand{\vc}[1]{{\bm{#1}}}
\newcommand{\R}{\mathbb{R}}
\newcommand{\e}{\varepsilon}
\begin{document}

\title[On a tumor growth model] {On a nonlinear model for tumor growth with drug application}

\author[Donatelli]{Donatella Donatelli}
\address[Donatelli]{\newline
Departement of Engineering Computer Science and Mathematics\\
University of L'Aquila\\
67100 L'Aquila, Italy.}
\email[]{\href{donatell@univaq.it}{donatell@univaq.it}}
\urladdr{\href{http://univaq.it/~donatell}{univaq.it/\~{}donatell}}

\author[Trivisa]{Konstantina Trivisa}
\address[Trivisa]{\newline
Department of Mathematics \\ University of Maryland \\ College Park, MD 20742-4015, USA.}
\email[]{\href{http://www.math.umd.edu}{trivisa@math.umd.edu}}
\urladdr{\href{http://www.math.umd.edu/~trivisa}{math.umd.edu/\~{}trivisa}}

\date{\today}

\subjclass[2010]{Primary: 35Q30, 76N10; Secondary: 46E35.}

\keywords{Tumor growth models, cancer progression, mixed models, moving domain, penalization, existence.}

\thanks{}

\maketitle

\begin{abstract}
We investigate the dynamics of a nonlinear system modeling tumor growth with drug application. The tumor is viewed as a mixture consisting of proliferating, quiescent and dead cells as well as a nutrient  in the presence of a drug. 
The system is given by a multi-phase flow model: the densities of the different cells are governed by a set of  transport equations, the density of the nutrient and the density of the drug are governed by rather general diffusion equations, while the velocity of the tumor is given by Brinkman's equation. The domain occupied by the tumor in this setting is a growing continuum  $\Omega$ with boundary $\partial \Omega$  both of which  evolve in time. Global-in-time weak solutions are obtained using an  approach based on penalization of the boundary behavior, diffusion and  viscosity in the weak formulation. Both the solutions and the domain are rather general, no symmetry assumption is required and the result holds for large initial data. This article is part of a research program whose aim is the investigation of the effect of drug application  in tumor growth.
\end{abstract}

\tableofcontents{}

\section{Introduction}\label{S1}

\subsection{Motivation}
The investigation of the effect of drug application in the treatment of cancer is the subject of intense scientific effort. A major cause of the failure of chemotherapeutic treatments for cancer is the development of resistance to drugs. This article is part of a research program whose aim is the investigation of the effect of drug application on tumor growth.
We investigate the dynamics of a nonlinear system describing the evolution of cancerous cells.
In this setting, the tumor is viewed as a mixture consisting of proliferating, quiescent and dead cells in the presence of a nutrient (oxygen) and drug.
The mathematical model presented here  is governed by 
\begin{itemize}
\item a system of transport equations, which describe the evolution of the densities of the cells that are present in the tumor: proliferating cells with density $P$, quiescent cells with density $Q$ and dead cells with density $D$ (this part of the tumor includes what is known also as waste or extra-cellular medium),

\item two rather general diffusion equations which are used to describe the diffusion  of the nutrient (oxygen) within the tumor region and  the evolution of the drug within the same regime.  In general, these equations obey Fick's law: the nutrient is consumed at a rate proportional to the rate of cell mitosis, whereas the drug is consumed at a rate which is determined by the drug effectiveness,  

\item an extension of the Darcy law, known as Brinkman's equation, which determines the velocity field. The  continuous movement within the tumor region is  due to proliferation, mitosis, apoptosis or removal of cells. Note, the tumor in the present context is viewed as a fluid-like porous medium.
\end{itemize}
%\vspace{0.1in}

%\subsection{Modeling}

Motivated by the experiment of Roda {\em et al.} (2011, 2012) and the mathematical analysis in Friedman {\em et al.} \cite{Friedman-2004}, \cite{ChenFriedman-2013},  and Zhao in \cite{Zhao-2010} our model is based on  the following biological principles: 
\begin{enumerate}
\item[{\bf [P1]}] Living cells are either in a   {\em proliferating phase} or in a  {\em quiescent phase}.

\item[{\bf [P2]}] Proliferating cells die as a result of {\em apoptosis,} which is a cell-loss mechanism. Quiescent cells die in part due to {\em apoptosis} and more often  due to starvation. In fact the proliferation and the necrotic death rates of tumor cells depend on the oxygen level.
\item [{\bf [P3]}] The dead tumor cells are obtained from necrosis and apoptosis of live tumor cells, and they are cleared by macrophages.

\item [{\bf [P4]}] Living cells undergo {\em mitosis}, a process that takes place in the nucleus of a dividing cell. 
\item[{\bf [P5]}] Cells change from quiescent phase into proliferating phase at a rate which increases with the nutrient level, and they die at a rate which increases as the level of nutrient (oxygen)  decreases.
\item[{\bf [P6]}]  Proliferating cells become quiescent   and die at a rate which increases as the nutrient concentration decreases. The proliferation rate increases with the nutrient concentration.
\item[{\bf [P7]}] Proliferating cells and quiescent cells become dead cells at a rate which depends on the drug concentration.
\end{enumerate}
  
The tumor region $\Omega_t :=\Omega(t)$ is contained in a fixed domain $B$ and the region 
$B\setminus \Omega_t$ represents the healthy tissue (see \figurename~\ref{regions}).  The tumor region 
$\Omega_t$ and its boundary $\partial\Omega_t$ evolve with respect to time. Both live and dead tumor cells are assumed to be in the tumor region $\Omega_t;$ oxygen molecules can diffuse throughout the whole domain $B$.  Abnormal proliferation of tumor cells generates internal pressure in $\Omega(t)$, resulting to a velocity field $\vc{v} \not= 0$ (while $\vc{v} = 0$ in $B\setminus \Omega_t$).

%Global-in-time weak solutions are obtained using an  approach based on penalization of the boundary behavior, diffusion and  viscosity in the weak formulation. Both the solutions and the domain are rather general, no symmetry assumption is required and the result holds for large initial data. 

\begin{figure}[htbp] 
%\begin{center}
\centering
\includegraphics[width=5cm]{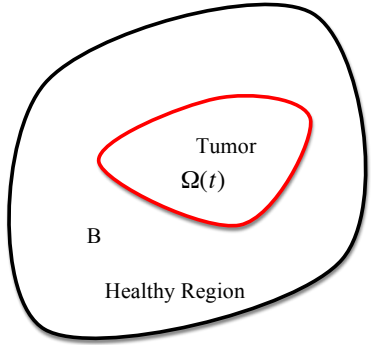}
\caption{Healthy tissue - Tumor regime.} 
\label{regions} 
%\end{center}
\end{figure}

\subsection{Governing equations of cells, oxygen and drug}
\subsubsection{Transport equations for the evolution of the cell densities}
All the cells are assumed to follow the general continuity equation:
\begin{equation}
\frac{\partial \vr}{\partial t} + \Grad \cdot (\vr  {\vc{v}}) = G, \nonumber
\end{equation}
where $\vr$ may represent  densities of proliferating/quiescent and dead cells. The function $G$ includes in general proliferation, apoptosis or clearance of cells, and chemotaxis terms as appropriate.

Due to proliferation and removal of cells, there is a continuous motion within the tumor represented by a velocity field ${\vc{v}}$. 
We assume that there are three types of cells: proliferative cells with density $P,$ quiescent cells with density $Q$ and dead cells with density  $D$ in the presence of a nutrient (oxygen) with density $C$
and a drug with density $W.$
The rates of change from one phase to another are functions of the nutrient concentration C:
$$ P \to Q \,\, \mbox{at rate}\,\, K_Q (C),$$
$$ Q \to P \,\, \mbox{at rate}\,\, K_P (C), $$
$$ P \to D \,\, \mbox{at rate}\,\, K_A  (C), $$
$$ Q \to D \,\, \mbox{at rate}\,\, K_D (C), $$
where $K_A$ stands for apoptosis. Finally, dead cells are removed at rate $K_R$ (independent of $C$), and the rate of cell proliferation (new births) is $K_B©.$ 
\smallskip

\subsubsection{The tumor tissue as a porous medium}
Due to proliferation and removal of cells there is continuous motion of cells within the tumor; this movement is represented by the velocity field $\vc{v}$ given by an alternative to Darcy's equation known as {\em Brinkman's equation}
\begin{equation}
\Grad \sigma = - \frac{\mu}{K} \vc{v} + \mu \Delta \vc{v}.\label{pressure2}
\end{equation}
%\vspace{0.08in}

\noindent
where $\sigma$ denotes the pressure, $\mu$  is a positive constant describing the viscous like properties of tumor cells, whereas $K$ denotes the permeability. 

Relation \eqref{pressure2} includes two viscous terms. The first term is the usual Darcy law and the second is analogous to the Laplacian term that appears in the Navier-Stokes equation. 
At a first look, \eqref{pressure2} appears as an over damped force balance. A second interpretation of  this relation states that the tumor tissue is ``fluid like" and that the tumor cells flow through the fixed extracellular matrix like a flow through a porous medium, obeying Brinkman's law.

The mass conservation laws for the densities of the proliferative cells $P,$ quiescent cells $Q$ and dead cells $D$  in $\Omega(t)$ 
take the following form:
\begin{equation}
 \frac{\partial P}{\partial t} + \Div (P \vc{v}) = \vc{G_P}, \label{dP}
 \end{equation}
 \begin{equation}
 \frac{\partial Q}{\partial t} + \Div (Q \vc{v}) = \vc{G_Q}, \label{dQ}
\end{equation}
\begin{equation}
\frac{\partial D}{\partial t} + \Div (D \vc{v}) = \vc{G_D}. \label{dD}
\end{equation}
%Without loss of generality we assume (cf.\  Friedman \cite{Friedman-2004}) that the source terms ${\bf \{G_P, G_Q, G_D\}}$ are of the form:
Following Friedman \cite{Friedman-2004}, the source terms ${\bf \{G_P, G_Q, G_D\}}$ are of the following form:
\begin{equation}\label{Gp}
 \vc{G_P} =  \left(K_B C - K_Q (\bar C- C) - K_A(\bar C - C)\right) P + K_P C Q - i_1 G_1(W) P, 
\end{equation}
where  $G_1(\cdot)$  a  smooth function and $K_{B}$, $K_{Q}$, $K_{A}$ are positive constants. The first term in this equation accounts for the increase of the number of cells due to new births, loss due to change of phase from proliferating to quiescent and loss due to apoptosis. The second term reflects the increase of the number of proliferating cells generated from quiescent cells, whereas the third term accounts for the decrease of the number of cells due to death resulting from the  effect of drug. In an analogous fashion
\begin{equation}\label{Gq}
 \vc{G_Q} = K_Q (\bar C- C)P - \left(K_P C + K_D(\bar C - C)\right) Q - i_2 G_2(W) Q,
\end{equation}
with $G_2(\cdot)$ a  smooth function and $K_{P}$, $K_{Q}$, $K_{D}$  positive constants. In the above relations \eqref{Gp}-\eqref{Gq} $i_1 G_1(W)$ and   $i_2 G_2(W)$ denote the rates by which  the proliferating cells and the quiescent  cells  become dead cells due to the drug. Finally,
\begin{equation}\label{Gd}
\vc{G_D} =  K_A (\bar C- C)P  + K_D (\bar C-C) Q - K_R D + i_1G_1(W)P + i_2 G_2(W)Q.
\end{equation}

\iffalse
\begin{equation}
\begin{cases} \label{GG}
% \label{G}
& \!\!\! \vc{G_P} =  \left(K_B C - K_Q (\bar C- C) - K_A(\bar C - C)\right) P + K_P C Q - i_1 G_1(W) P\\
& \!\!\! \vc{G_Q} = K_Q (\bar C- C)P - \left(K_P C + K_D(\bar C - C)\right) Q - i_2 G_2(W) Q \\
& \!\!\! \vc{G_D} =  K_A (\bar C- C)P  + K_D (\bar C-C) Q - K_R D + i_1G_1(W)P + i_2 G_2(W)Q,
\end{cases}
\end{equation}
\fi
\smallskip
\subsubsection{A linear diffusion equation for the evolution of nutrient}

Tumor cells consume nutrients (oxygen). In contrast to the equations of cell densities, the equations of the oxygen molecules in the tumor include diffusion terms in the following form:

\begin{equation}
\frac{\partial C}{\partial t} = \Grad \cdot (\nu_1 \Grad C) -  \left(K_1 K_P CP + K_2 K_Q(\bar{C}-C)Q\right)C. \nonumber
\end{equation}
Assuming that $\nu_1$ is constant this equation (cf. Friedman \cite{Friedman-2004}) becomes 
\begin{equation}\label{dC}
%\begin{cases}
\frac{\partial C}{\partial t} = \nu_1 \Delta C -  \left(K_1 K_P CP + K_2 K_Q(\bar{C}-C)Q\right)C.
%C(x,t) = \bar{C} \,\, \mbox{on}\,\, \partial \Omega(t),\,\,\, C(x,0) = C_0(x) \,\, \mbox{in}\,\, \Omega(0).
%\end{cases}
\end{equation}
This equation describes the diffusion of the oxygen in the tumor region. According to (cf. Ward and King \cite{WardKing-1997}, \cite{WardKing-1999}) the nutrient is consumed at a rate proportional to the rate of cell mitosis, namely  the  second term 
% $$ \left(K_1 K_P CP + K_2 K_Q(C-\bar{C})Q\right)C$$
  on the right-hand side of  the first equation in \eqref{dC}.  We also refer the reader to Friedman \cite{Friedman-2004}  where a class of relevant tumor growth models  are presented and the evolution of the nutrient is given by a related equation.
  
  \subsubsection{A linear diffusion equation for the evolution of drug}
 The evolution of the drug concentration in the tumor is given by a diffusion equation of the form
  \begin{equation}
\frac{\partial W}{\partial t} = \Grad \cdot (\nu_2 \Grad W) -  \left(\mu_1 G_1(W)P + \mu_2  G_2(W) Q\right) W,\nonumber
\end{equation}
with $G_1(\cdot), G_2(\cdot)$ smooth functions.

Assuming that $\nu_2$ is constant this equation (cf. Zhao \cite{Zhao-2010}) becomes 
\begin{equation}\label{dW}
%\begin{cases}
\frac{\partial W}{\partial t} = \nu_2 \Delta W -  \left(\mu_1 G_1(W) P + \mu_2 G_2(W) Q \right)W.
%&W(x,t) = \bar{W} \,\, \mbox{on}\,\, \partial \Omega(t),\,\,\, W(x,0) = W_0(x) \,\, \mbox{in}\,\, \Omega(0).
%\end{cases}
\end{equation}
This equation describes the diffusion of the drug within the tumor region. The second term of the right-hand side of \eqref{dW} represents the drug consumption, the constants $\mu_1, \mu_2$ are two positive constants which can be viewed as a measure of the drug effectiveness. We refer the reader to Ward and King \cite{WardKing-1997, WardKing-2003}
and Zhao \cite{Zhao-2010} for further comments.

\smallskip

The total density of the mixture is denoted by $\vr_f$ and is given by\\
\begin{equation}
 \vr_f = P+Q+D = Constant.
\label{density}
\end{equation}

Adding \eqref{dP}-\eqref{dD} and taking into consideration \eqref{density} we arrive at the following relation, which represents an additional constraint\\
 \begin{align}
\rho_{f}\Div{\vc{v}}= \vc{G_P} + \vc{G_Q}+\vc{G_D}
=
K_{B} C P-K_{R}D.
\label{divcon}
\end{align}
Our aim is to study the system \eqref{pressure2}-\eqref{divcon} in a spatial domain $\Omega_t$, with a boundary $\Gamma =\partial\Omega_t$ varying in time. 

\subsection{Boundary behavior}
The boundary of the domain $\Omega_t$ occupied by the tumor is described by means of a given velocity $\vc{V}(t, \vc{x}),$ where $t \ge 0$ and $\vc{x} \in \R^3.$ More precisely, assuming $\vc{V}$ is regular, we solve the associated system of differential equations
\begin{equation}
\frac{d}{dt} \vc{X}(t, \vc{x}) = \vc{V}(t, \vc{X}), \,\, t > 0, \,\, \vc{X}(0,\vc{x}) = \vc{x}, \nonumber
\end{equation}
and set
\begin{equation}
\begin{cases}
\!\!\! \! & \Omega_{\tau} = \vc{X}(\tau, \Omega_0), \,\, \mbox{where}\,\, \Omega_0 \subset \R^3\,\, \mbox{is a given domain,}\\
\!\! \!\! & \Gamma_{\tau} = \partial \Omega_{\tau}, \,\, \mbox{and} \,\, Q_{\tau} = \left\{(t,x) | t \in (0,\tau), x\in  \Omega_{\tau}\right\}.
\end{cases}\nonumber
\end{equation}
\smallskip

 The model is closed by giving boundary conditions on the (moving) tumor boundary $\Gamma_{\tau}.$ 
More precisely, we assume that the boundary $\Gamma_{\tau}$ is impermeable,  meaning
\begin{equation}
 (\vc{v} - \vc{V}) \cdot \vc{n}|_{\Gamma_{\tau}} = 0, \,\, \mbox{for any}\,\, \tau \ge 0. \label{BC1}
 \end{equation}
In addition, for {\em viscous} fluids, Navier proposed the boundary condition of the form
\begin{equation}
[\mathbb{S} \vc{n}]_{\mbox{tan}}|_{\Gamma_{\tau}} = 0, \label{BC2}
\end{equation}
with $\mathbb{S}$ denoting the viscous stress tensor which in this context is assumed to be determined through Newton's rheological law
$$
\mathbb{S} = \mu \Big( \Grad \vc{v} + \Grad^{\perp} \vc{v} - {2 \over 3} \Div
\vc{v} \mathbb{I} \Big) + \xi \Div \vc{v} \mathbb{I},
$$
where $\mu> 0$, $\xi \geq 0$ are respectively the shear and bulk
viscosity coefficients. Condition \eqref{BC2} namely says that the tangential component of the normal 
viscous stress vanishes on $\Gamma_{\tau}.$
\smallskip

%In biological settings, flow situations arise  for which the no-slip condition gives rise to significant difficulties without capturing the essence of the problem. Specific cases in which some form of slip is essential include moving boundary domains involving a fluid-fluid interface or free surface that intersects a solid  surface (cf.\ Dussan, 1979; de Gennes, 1985; Behr, 2004; Qian {\em et al.}, 2006).
% While it might be that, even at this scale, the no-slip condition  is valid for a fluid in contact with a solid surface, there appear to be conditions (for instance nano bubbles on the surface) that result in apparent slip, making the slip condition a useful macroscopic model (Celata {\em et al.,} 1996).
  
The concentrations of the nutrient and the drug on the boundary satisfy the conditions:
\begin{equation}
C(x,t)|_{\Gamma_t} = 0,\,\,\, W(x,t)|_{\Gamma_t} = 0  \label{BC3}.
\end{equation} 
%Here, $\bar{C}$ and $\bar{W}$  denote positive constants reflecting the constant drug supply that the tumor receives from its boundary.

Finally, the problem \eqref{dP}-\eqref{BC3} is supplemented by the initial conditions
\begin{equation}
\begin{cases}
& \!\!\!\!  P(0, \cdot) = P_0, \,\, Q(0, \cdot) = Q_0, \,\, D(0, \cdot) = D_0, \\
& \!\!\!\!  C(0, \cdot) = C_{0}\le {\bar C}, \,\, W(0, \cdot) = W_{0}\,\,\, \text{in} \,\,\, \Omega_0.
\end{cases}
 \label{IC}
\end{equation}
Our main goal is to show the existence of global in time weak solutions to  \eqref{pressure2}-\eqref{IC} for any finite energy  initial data. Related works on the mathematical analysis of cancer  models have been presented by Zhao \cite{Zhao-2010} based on the farmework introduced  by Friedman {\em et al.}  \cite{Friedman-2004}, \cite{ChenFriedman-2013}. The analysis in \cite{Friedman-2004}, \cite{ChenFriedman-2013} yields existence and uniqueness of solution  to a related model in the radial symmetric case for a small time interval $[0,T].$ The analysis in \cite{Zhao-2010} treats a parabolic-hyperbolic free boundary problem and provides a unique global solution in the radially symmetric case. In the forth mentioned articles  the tumor tissue is assumed to be a porous medium and the velocity field is  determined by  Darcy's Law
$$\vc{v} = - \Grad_x \sigma \,\, \mbox{in} \,\, \Omega(t).$$

In  \cite{DT-MixedModel-2013}, Donatelli and Trivisa establish the global existence of weak solutions to a nonlinear system modeling tumor growth in a general moving domain $\Omega_t \subset \R^3$ without any symmetry assumption and for finite large initial data. 
The article \cite{DT-MixedModel-2013} is according to our knowledge the first article treating the problem in a general setting. In \cite{DT-VariableDensity-2014} the same authors establish the global existence of weak solutions to a nonlinear system for tumor growth in the case of variable total density of cells within a cellular medium.

The present article extends earlier results in a variety of ways. First the effect of drug application is being considered within a moving domain in $\R^3$ without any symmetry considerations. Second, the transport equations  are rather general capturing more effectively the biological  setting.  Our framework relies on biologically grounded principles ${\bf [P1]-[P7]},$ which are motivated by experiments performed by Roda {\em et al.} \cite{Roda-etal-2011} \cite{Roda-etal-2012A}, \cite{Roda-etal-2012}  and provide  a  description of the dynamics of the population of cells within the tumor.

\par\smallskip

We establish the global existence of  weak solutions to \eqref{pressure2}-\eqref{IC} on time dependent domains, supplemented with slip boundary conditions. In the center of our approach lie the so-called 
{\em generalized penalty methods}  typically suitable for treating partial slip, free surface, contact and related boundary conditions in viscous flow analysis and simulations.
As has been seen in earlier works, (cf. Carey and Krishnan \cite{CareyKrishnan-1982}, \cite{CareyKrishnan-1984},\cite{CareyKrishnan-1985}, Donatelli and Trivisa \cite{DT-MixedModel-2013}) penalty methods  provide an additional weakly enforce constraint in the problem.
This form of boundary penalty approximation appeared  by Courant in  \cite{Courant-1956}, in the context of  slip conditions for stationary incompressible fluids by Stokes and Carrey in \cite{StokesCarey-2011}, and more recently in a series of articles (cf. \cite{DT-MixedModel-2013}, \cite{DT-VariableDensity-2014}, \cite{FeireislNS-2011}, \cite{FeireislKNNS-2013}).
The existence theory for the barotropic Navier-Stokes
system on {\em fixed} spatial domains in the framework of weak solutions was developed in the seminal work of Lions \cite{Lions-1998}. 

\subsection{Outline}
The paper is organized as follows: Section \ref{S1} presents the motivation, modeling  and   introduces the necessary preliminary material. Section \ref{S2}  provides a weak formulation of the problem and states the main result. Section \ref{S3}  is devoted to the penalization problem and to the construction  of a suitable approximate scheme. The central component of the approximating procedure is the addition of  a singular forcing term
\begin{equation*}
\frac{1}{\varepsilon}  \int_{\Gamma_t} (\vc{v}-\vc{V}) \cdot {\bf n} \vc{\varphi} \cdot \vc{n} dS_x ,\,\,\, \varepsilon>0\,\, \mbox{small}, 
%\label{penalty}
\end{equation*}
penalizing the normal component of the velocity on the boundary of the tumor domain
in the variational formulation of Brinkman's equation. We remark that  applying a penalization method to the slip boundary conditions is extremely delicate. Unlike for no-slip boundary condition, where the fluid velocity coincides with the field $V$ outside $\Omega_{\tau}$, it is only its normal component  $\vc{v} \cdot {\bf n}$ that can be controlled in the case of slip.  In order to treat the moving boundary, additional penalizations on the viscosity and diffusion parameters are required.
In Section \ref{S4}  we give a sketch on the existence of solutions of the penalization scheme in the healthy tissue.
In Section \ref{S5} we collect all the uniform bounds satisfied by the solution of the penalization scheme. In Section \ref{S6}, the singular limits for $\varepsilon \to 0, \omega \to 0$ are performed successively.
A key part  in the penalization limit is to get rid of the terms supported in the healthy tissue part $((0,T)\times B)\backslash Q_{T}$.  The main issue is to describe the evolution of the interface $\Gamma_{\tau}.$ To that effect we employ elements from the so-called {\em level set method}  (cf.\ Osher and Fedwik \cite{OshFed03}).

\section{Weak formulation and main results}\label{S2}
\subsection{Weak solutions}
\begin{definition}\label{D2.1}
 We say that $(P, Q, D, \vc{v}, C, W)$ is a weak solution of problem
(\ref{pressure2})- \eqref{Gd}, \eqref{dC}, \eqref{dW}, \eqref{density} supplemented with boundary data satisfying
(\ref{BC1})-(\ref{BC3})  and initial data $(P_0,  Q_0, D_0, C_0, W_0)$
satisfying (\ref{IC}) provided that the following hold:
\vspace{0.1in}

$\bullet$ $(P,Q, D) \ge 0$ represents a weak solution of \eqref{dP}-\eqref{dQ}-\eqref{dD} on $(0,\infty)\times\Omega_{\tau}$, i.e., for any test function $\varphi \in C^{\infty}_c (([0,T)\times \mathbb{R}^3), T>0$ 
the  following integral relations hold
%\iffalse
%\begin{equation}
% \int_{\Omega_{\tau}}  P \varphi(\tau,\cdot) \, dx - \int_{\Omega_0} P_0 \varphi(0,\cdot)dx = \int_0^{\tau} \int_{\Omega_t} \left( P \partial_t \varphi + P \vc{v} \cdot \Grad_x \varphi + \vc{G_P}  \varphi(t, \cdot) \right) dx dt \label{w-Pa},
% \end{equation}
%\begin{equation} 
% \int_{\Omega_{\tau}} Q \varphi(\tau,\cdot) \, dx - \int_{\Omega_0} Q_0 \varphi(0,\cdot)dx = \int_0^{\tau} \int_{\Omega_t} \left( Q \partial_t \varphi + P \vc{v} \cdot \Grad_x \varphi + \vc{G_Q}  \varphi(t, \cdot) \right) dx dt \label{w-Qa}, 
%\end{equation}
%\begin{equation}
%\int_B D \varphi(\tau,\cdot) \, dx - \int_B D_0 \varphi(0,\cdot)dx = \int_0^{\tau} \int_B \left( D \partial_t \varphi + D \vc{v} \cdot \Grad_x \varphi + \vc{G_D}  \varphi(t, \cdot) \right) dx dt \label{w-Da}.
%\end{equation}
%\fi
\smallskip

%\begin{equation} 
%\!\!\! 
%\begin{cases} 
%\int_{\Omega_{\tau}}  P \varphi(\tau,\cdot) \, dx &\!\! - \int_{\Omega_0}  P_0 \varphi(0,\cdot)dx = \nonumber\\
%&\int_0^{\tau} \int_{\Omega_t} \left( P \partial_t \varphi + P \vc{v} \cdot \Grad_x \varphi + \vc{G_P}  \varphi(t, \cdot) \right) dx dt,\nonumber \\ \\
%\int_{\Omega_{\tau}}  Q \varphi(\tau,\cdot) \, dx & \!\! - \int_{\Omega_0}  Q_0 \varphi(0,\cdot)dx  = 
%\nonumber \\
%&\int_0^{\tau} \int_{\Omega_t} \left( Q \partial_t \varphi + P \vc{v} \cdot \Grad_x \varphi + \vc{G_Q}  \varphi(t, \cdot) \right) dx dt, \nonumber \\ \\
% \int_{\Omega_{\tau}} D \varphi(\tau,\cdot) \, dx & \!\! - \int_{\Omega_0}  D_0 \varphi(0,\cdot)dx  = \nonumber\\
% &\int_0^{\tau} \int_{\Omega_t} \left( D \partial_t \varphi + D \vc{v} \cdot \Grad_x \varphi + \vc{G_D}  \varphi(t, \cdot) \right) dx dt \nonumber.
%\end{cases}
%\end{equation}

\begin{equation} 
\left.
\begin{array}{l}
\displaystyle{\int_{\Omega_{\tau}}  P \varphi(\tau,\cdot) \, dx  - \int_{\Omega_0}  P_0 \varphi(0,\cdot)dx = }\\
\hspace{1.5cm}\displaystyle{\int_0^{\tau} \!\!\int_{\Omega_t} \left( P \partial_t \varphi + P \vc{v} \cdot \Grad_x \varphi + \vc{G_P}  \varphi(t, \cdot) \right) dx dt}, \\ \\
\displaystyle{\int_{\Omega_{\tau}}  Q \varphi(\tau,\cdot) \, dx - \int_{\Omega_0}  Q_0 \varphi(0,\cdot)dx}  = 
 \\
\hspace{1.5cm}\displaystyle{\int_0^{\tau} \!\!\int_{\Omega_t} \left( Q \partial_t \varphi + P \vc{v} \cdot \Grad_x \varphi + \vc{G_Q}  \varphi(t, \cdot) \right) dx dt,}\\ \\
 \displaystyle{\int_{\Omega_{\tau}} D \varphi(\tau,\cdot) \, dx  - \int_{\Omega_0}  D_0 \varphi(0,\cdot)dx  =} \\
 \hspace{1.5cm}\displaystyle{\int_0^{\tau} \!\!\int_{\Omega_t} \left( D \partial_t \varphi + D \vc{v} \cdot \Grad_x \varphi + \vc{G_D}  \varphi(t, \cdot) \right) dx dt}.
\end{array}
\right\}
\tag {\bf{I}}
 \label{w-Da}
\end{equation}
%In the sequel we refer to these relations as {\text{\bf (I)}}.
In particular, 
$$P \in L^p([0,T]; \Omega_{\tau}), \,\, Q \in L^p([0,T]; \Omega_{\tau}), \,\,D \in L^p([0,T]; \Omega_{\tau}) \,\, \mbox{for all}\,\, p \ge 1. $$ 

We remark that in  the weak formulation, it is convenient that the equations \eqref{dP}-\eqref{dD} hold in the whole space $\mathbb{R}^3$ provided that the densities $(P,Q,D)$ are extended to be zero outside the tumor domain.
\smallskip

%\item 
$\bullet$ Brinkman's equation \eqref{pressure2} holds in the sense of distributions, i.e., for any test function 
$\vc{\varphi} \in C^{\infty}_c(\mathbb{R}^3; \mathbb{R}^3)$ satisfying 
$$ \vc{\varphi} \cdot  \vc{n}|_{\Gamma_{\tau}} = 0\,\, \mbox{for any}\,\, \tau \in [0,T],$$ 
the following integral relation holds
\begin{equation}
\int_{\Omega_\tau} \sigma \Div \vc{\varphi} \, dx-   
 \int_{\Omega_\tau} \left( \mu\Grad_x \vc{v}  : \Grad_x \vc{\varphi} + \frac{\mu}{K} \vc{v} \vc{\varphi}   \right) dx=0.
\label{w-pressure2}
\end{equation}

All quantities in \eqref{w-pressure2} are required to be integrable, so in particular, 
$$\vc{v} \in W^{1,2}(\mathbb{R}^3;\mathbb{R}^3),$$
and
$$ (\vc{v - V}) \cdot \vc{n}(\tau, \cdot)|_{\Gamma_{\tau}}=0\,\, \mbox{for a.a.}\,\, \tau \in [0,T].$$

\smallskip

%\item 
$\bullet$ $C \geq 0$ is a weak solution of \eqref{dC}, i.e.,  for any test function $\varphi \in C^{\infty}_c ([0,T)\times \mathbb{R}^3), T>0$ 
the  following integral relations hold

\[
\int_{\Omega_{\tau}}  C \varphi(\tau,\cdot) \, dx - \int_{\Omega_0}  C_0 \varphi(0,\cdot)dx  = \int_0^{\tau} \!\!\int_{\Omega_{t}}  C \partial_t \varphi dx dt   -
\]
\[
\int_{0}^{\tau} \!\!\int_{\Omega_t} \nu_1  \Grad_x C\cdot \Grad_x \varphi dx dt 
- \int_0^{\tau} \!\!\int_{\Omega_t}  \left(K_1 K_P CP + K_2 K_Q(\bar{C}-C)Q\right)  C  \varphi dx dt. 
\]
\smallskip

$\bullet$ $W \geq 0$ is a weak solution of \eqref{dW}, i.e.,  for any test function $\varphi \in C^{\infty}_c ([0,T)\times \mathbb{R}^3), T>0$ 
the  following integral relations hold

\[
\int_{\Omega_{\tau}}  W \varphi(\tau,\cdot) \, dx - \int_{\Omega_0}  W_0 \varphi(0,\cdot)dx  = \int_0^{\tau} \!\!\int_{\Omega_{t}}  W \partial_t \varphi dx dt   -
\]
\[
\int_{0}^{\tau} \!\!\int_{\Omega_t} \nu_2  \Grad_x W \cdot \Grad_x \varphi dx dt 
- \int_0^{\tau} \!\!\int_{\Omega_t}   \left(\mu_1 G_1(W) P + \mu_2 G_2(W) Q \right) Wdx dt. 
\]
\end{definition}

The main result of the article now follows. 

\begin{theorem}\label{T2.2}
Let $\Omega_0 \subset \mathbb{R}^3$ be a bounded domain of class $C^{2+\nu}$  and let 
$$\vc{V} \in C^1([0,T]; C^3_c(\mathbb{R}^3; \mathbb{R}^3))$$
 be given. Let the initial data satisfy
 $$P_0 \in L^{p}(\mathbb{R}^3), \,\, Q_0 \in L^{p}(\mathbb{R}^3),\,\, D_0 \in L^{p}(\mathbb{R}^3),\,\, \mbox{for all}\,\, p\ge 1$$
 and 
$$  C_0 \in L^{2}(\mathbb{R}^3)\cap L^{\infty}(\mathbb{R}^3),\,\, W_0 \in L^{2}(\mathbb{R}^3)\cap L^{\infty}(\mathbb{R}^3), $$ 
$$with \,\, (P_0, Q_0, D_0, C_{0}, W_0) \ge 0, \,\,\,  (P_0, Q_0, D_0, C_0, W_0) \not\equiv 0,$$
$$ P_0+Q_0+D_0=\varrho_{f},\quad  (P_0, Q_0, D_0, C_0, W_0)|_{\mathbb{R}^3 \setminus \Omega_0} =0. $$

%and pressure $\sigma$ such that \eqref{pressure}, \eqref{pressure2} are satisfied.
Then the problem \eqref{pressure2}-\eqref{Gd}, \eqref{dC}, \eqref{dW}-\eqref{divcon} with initial data \eqref{IC}  and boundary data \eqref{BC1}-\eqref{BC3} admits a weak solution in the sense specified in Definition \ref{D2.1}.
\end{theorem}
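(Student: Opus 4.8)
The plan is to construct the weak solution by a multi-layer approximation scheme, in which the moving boundary $\Gamma_\tau$ is enforced only in the limit through penalization, and to pass to the limit in the parameters successively. First I would fix a small parameter $\varepsilon>0$ and replace the impermeability condition \eqref{BC1} by the singular forcing term
\begin{equation*}
\frac{1}{\varepsilon}\int_{\Gamma_t}(\vc{v}-\vc{V})\cdot\vc{n}\,\vc{\varphi}\cdot\vc{n}\,dS_x
\end{equation*}
added to the weak form \eqref{w-pressure2} of Brinkman's equation, posed now on the whole fixed domain $B$; simultaneously I would regularize the viscosity and the diffusion coefficients $\nu_1,\nu_2$ by additional penalization parameters (call the extra parameter $\omega$) so that the equations for $C$, $W$ and $\vc{v}$ remain uniformly parabolic/elliptic on all of $B$ while degenerating to the correct behaviour outside $Q_T$ only in the limit. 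On this fixed-domain problem the transport equations \eqref{dP}--\eqref{dD} are solved by the method of characteristics (or a renormalized/DiPerna--Lions argument) for a given Lipschitz velocity field, yielding nonnegative $P,Q,D$ in every $L^p$; the diffusion equations \eqref{dC},\eqref{dW} are solved by standard linear parabolic theory, with nonnegativity and the bound $C\le\bar C$ coming from a maximum principle exploiting the sign of the reaction terms; and Brinkman's equation, being linear and coercive in $\vc{v}$ once $\sigma$ is eliminated via the divergence constraint \eqref{divcon}, is solved by Lax--Milgram. The full coupled approximate system is then obtained by a fixed-point argument (Schauder) on the velocity field, using the compactness furnished by the parabolic regularity of $C,W$ and by the $W^{1,2}$ bound on $\vc{v}$.

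\textbf{Uniform estimates and passage to the limit.} The next step is to derive estimates uniform in $\varepsilon$ and $\omega$. Testing Brinkman's equation by $\vc{v}-\vc{V}$ and using \eqref{divcon} to control $\sigma\Div\vc{v}$ produces a bound on $\|\Grad_x\vc{v}\|_{L^2(B)}$, on $\tfrac1K\|\vc{v}\|_{L^2}$, and crucially on $\tfrac1\varepsilon\|(\vc{v}-\vc{V})\cdot\vc{n}\|_{L^2(\Gamma_t)}^2$, which is what forces the impermeability condition in the limit $\varepsilon\to0$. The transport equations give $L^\infty_tL^p_x$ bounds on $P,Q,D$ independent of the parameters (Gronwall against the source terms, whose growth is linear in the densities once $C,W$ are bounded). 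The diffusion equations give $L^\infty_tL^2_x\cap L^2_tW^{1,2}_x$ bounds and, via the equation, $L^2_tW^{-1,2}_x$ bounds on $\partial_tC,\partial_tW$, hence Aubin--Lions compactness. I would then send $\varepsilon\to0$ first: the velocity converges weakly in $W^{1,2}(B)$, its limit satisfies $(\vc{v}-\vc{V})\cdot\vc{n}=0$ on $\Gamma_\tau$, and the densities and concentrations converge (strongly enough for the products and the source terms) to solve the system on the still-regularized domain. Finally $\omega\to0$ removes the artificial diffusion/viscosity outside $Q_T$, localizing all equations to the tumor region; here the characteristic function $\mathbf{1}_{\Omega_t}$ of the moving domain is propagated as a renormalized solution of the transport equation $\partial_t\mathbf{1}_{\Omega_t}+\vc{V}\cdot\Grad_x\mathbf{1}_{\Omega_t}=0$, which is exactly where the level-set viewpoint of Osher--Fedkiw enters, allowing one to identify the limit of terms supported on $((0,T)\times B)\setminus Q_T$ as zero and to recover the integral identities of Definition \ref{D2.1}.

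\textbf{Main obstacle.} The hardest part is the penalization limit $\varepsilon\to0$ combined with the identification of the limit domain, precisely because the boundary condition being penalized is the slip (only-normal-component) condition rather than full no-slip: outside $\Omega_\tau$ the limiting velocity need not equal $\vc{V}$, only $\vc{v}\cdot\vc{n}=\vc{V}\cdot\vc{n}$ there, so one cannot simply argue that $\vc{v}$ becomes the rigid boundary field on the complement. Controlling the tangential behaviour, showing that the penalty term $\tfrac1\varepsilon(\vc{v}-\vc{V})\cdot\vc{n}$ converges (as a measure or a distribution on $\Gamma_\tau$) to the pressure-type Lagrange multiplier that should appear, and making sure the transported densities genuinely vanish outside $\Omega_\tau$ in the limit — all of this requires the careful geometric bookkeeping of $\Gamma_\tau$ via the flow map $\vc{X}(\tau,\cdot)$ and the level-set function, together with the strong convergence of $\vc{v}\cdot\vc{n}$ on the boundary that the uniform estimate $\tfrac1\varepsilon\|(\vc{v}-\vc{V})\cdot\vc{n}\|_{L^2(\Gamma_t)}^2\le C$ provides. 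A secondary technical point is the strong convergence of $C,W$ needed to pass to the limit in the nonlinear reaction terms $G_1(W),G_2(W)$ and in the products $CP$, $(\bar C-C)Q$; this I expect to handle cleanly with Aubin--Lions, so it is not the real difficulty. Everything else — existence for the approximate system, the a priori bounds, the maximum principles — is routine given the structure already assembled in the earlier sections.
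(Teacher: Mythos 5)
Your overall architecture coincides with the paper's: a two-parameter penalization (the $\tfrac1\varepsilon$ boundary term plus variable viscosity/diffusion coefficients $\mu_\omega$, ${\nu_i}_\omega$ vanishing outside $Q_T$), existence on the fixed ball $B$, uniform bounds including $\int_{\Gamma_t}|(\vc{v}-\vc{V})\cdot\vc{n}|^2\,dS\le c\varepsilon$, then the successive limits $\varepsilon\to0$ and $\omega\to0$ with the level-set function tracking $\Gamma_\tau$. Two remarks on where you diverge. First, minor: for the approximate problem the paper does not use characteristics/DiPerna--Lions plus Lax--Milgram, but adds an artificial viscosity $\eta$ turning the transport equations into parabolic ones and runs a Faedo--Galerkin scheme on Brinkman's equation, closing the loop with a fixed point; either route is viable, and the parabolic regularization has the side benefit of giving $P+Q+D=\varrho_f$ on all of $B$ by a uniqueness/Gronwall argument. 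Second, and more substantively: your mechanism for killing the densities in the healthy tissue is under-specified and, as stated, does not quite work. You propose propagating $\mathbf{1}_{\Omega_t}$ as a renormalized solution of $\partial_t\mathbf{1}+\vc{V}\cdot\Grad_x\mathbf{1}=0$ at the stage $\omega\to0$, but the densities are transported by $\vc{v}$, not by $\vc{V}$, and outside $\Omega_\tau$ these fields differ (only their normal components agree on $\Gamma_\tau$), so the support of $P,Q,D$ is not simply carried along the flow of $\vc{V}$. The paper instead proves this at the $\varepsilon\to0$ stage (Lemma \ref{FL}): one tests the transport equation with $\varphi=[\min\{\Phi/\eta,1\}]^+$, a boundary-layer cutoff built from the level-set function, uses the algebraic identity $Z(\partial_t\Phi+\vc{v}\cdot\Grad_x\Phi)=Z(\vc{v}-\vc{V})\cdot\Grad_x\Phi$ together with the already-established trace condition $(\vc{v}-\vc{V})\cdot\vc{n}|_{\Gamma_\tau}=0$ to get $(\vc{v}-\vc{V})\cdot\Grad_x\Phi\in W^{1,2}_0$ near the interface, and then controls the singular factor $1/\eta$ via Hardy's inequality and the distance function $\delta$, concluding by Gronwall. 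You correctly flag this as the hard point in your ``main obstacle'' paragraph, but the concrete device (cutoff test function plus Hardy) is the missing ingredient; the $\omega\to0$ limit is then only needed to remove the residual viscous terms of Brinkman's equation in $B\setminus\Omega_t$.
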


\section{Penalization}\label{S3}
\subsection{General strategy}
The main ingredients of our strategy can be formulated as follows:
\begin{itemize}
\iffalse
\item {\color{blue}Biologically grounded hypotheses regarding the phase of the cells within the tumor and the processes that they undergo at different stages. These
hypotheses  are motivated by experiments performed by Roda {\em et al.} \cite{Roda-etal-2011} \cite{Roda-etal-2012}  and provide  a description of the cells within the tumor and their dynamic properties.}
\fi
\item Our approach relies on {\em penalization} of the boundary behavior, diffusion and viscosity in the weak formulation. A penalty approach to slip conditions for {\em stationary incompressible flow} was proposed by Stokes and Carey \cite{StokesCarey-2011} In the present setting, the variational (weak) formulation of the Brinkman equation is supplemented by a singular forcing term
\begin{equation} 
\frac{1}{\varepsilon} \int_{\Gamma_t} (\vc{v}-\vc{V}) \cdot {\bf n} \vc{\varphi} \cdot \vc{n} dS_x,\,\,\, \varepsilon>0\,\, \mbox{small}, \label{penalty}
\end{equation}
penalizing the normal component of the velocity on the boundary of the tumor domain.

\item In addition to  \eqref{penalty}, we introduce a {\em variable} shear viscosity coefficient $\mu = \mu_{\omega},$ as well as a {\em variable} diffusions $\nu_i={\nu_i}_{\omega}, i=1,2 $ with $\mu_{\omega}, {\nu_i}_{\omega}$  vanishing outside the tumor domain and remaining positive within the tumor domain.

\item In constructing the approximating problem we employ the variables $\varepsilon$ and $\omega.$ 
Keeping $\varepsilon$ and $\omega$ fixed, we solve the modified problem in a (bounded) reference domain $B \subset \mathbb{R}^3$ chosen in such way that 
$$\bar{\Omega}_{\tau} \subset B \,\, \mbox{for any}\,\, \tau \ge 0.$$  

\item We take the initial densities $(P_0, Q_0, D_0)$ vanishing outside $\Omega_0,$  and letting the penalization  $\varepsilon \to 0$ for fixed $\omega > 0 $ we obtain a ``two-phase" model consisting of the {\em tumor region}  and the {\em healthy tissue} separated by impermeable boundary.  We show that the densities vanish  in part of the reference domain, specifically on $((0,T) \times B) \setminus Q_T.$ 

\item We let first the penalization $\varepsilon$ vanish and next we perform the limit $\omega \to 0.$

%\item The boundary conditions considered  here are biologically relevant as confirmed  by experimental evidence. In biological settings flow situations are observed frequently for which the standard  {\em no-slip} condition give rise to significant difficulties without capturing the essence of the problem. Specific cases in which some form of {\em slip} is essential include moving boundary domains involving {\em tumor growth}.
\end{itemize}
\par\smallskip

\subsection{Penalization scheme}
As typical in time dependent  regimes the penalization can be applied to the interior of a fixed reference domains. In that way we obtain at the limit a two-phase model consisting of the tumor region  $\Omega_\tau$ and a healthy tissue $B\setminus \Omega_{\tau}$ separated by an impermeable interface
$\Gamma_{\tau}.$  As a result an extra stress is produced acting on the fluid by its complementary part outside $\Omega_{\tau}.$

We  choose  $R>0$ such that 
\begin{equation}
\vc{V}|_{[0,T] \times \{|\vc{x}| > R\} }=0, \,\,\, \bar{\Omega}_0 \subset \{|\vc{x}| <R\}\nonumber
\end{equation}
and we take as the reference fixed domain 
$$B = \{|\vc{x}| < 2R\}.$$

In order to eliminate this extra stresses we introduce  a variable shear viscosity coefficient $\mu = \mu_{\omega}(t, \vc{x})$ where, $\mu=\mu_{\omega}$ remains strictly positive in $Q_{T}$ but vanishes in $Q_{T}^{c}$ as $\omega \to 0$, namely $ \mu_{\omega}$
is taken such that 
$$ \mu_{\omega} \in C^{\infty}_c \left([0,T] \times \mathbb{R}^3\right), \,\,\, 0<\underline{{\mu}}_{\omega}\le \mu_{\omega}(t,\vc{x}) \le \mu\,\, \mbox{in}\,\, [0,T]\times B,$$
\begin{equation}
\mu_{\omega}=
\begin{cases}
\mu={\mathrm const}>0 & \text{in $Q_{T}$}\\
\mu_{\omega}\to 0 & \text{a.e. in $((0,T)\times B)\backslash Q_{T}$}
\end{cases}\nonumber
 %\label{visc-p}
\end{equation}
and a variable diffusion coefficients of the nutrient and the drug $\nu_{i} = {\nu_i}_{\omega}(t, \vc{x}),$ where $\nu_{i}={\nu_i}_{\omega}$, $i=1,2$ remain strictly positive in $Q_{T}$ but vanishes in $Q_{T}^{c}$ as $\omega \to 0$, namely $ {\nu_{i}}_{\omega}$
are taken such that 
$$ {\nu_i}_{\omega} \in C^{\infty}_c \left([0,T] \times \mathbb{R}^3\right), \,\,\, 0<\underline{{\nu}_i}_{\omega}\le {\nu_i}_{\omega}(t,\vc{x}) \le \nu_i\,\, \mbox{in}\,\, [0,T]\times B,$$
\begin{equation}
{\nu_i}_{\omega}=
\begin{cases}
{\nu}_i={\mathrm const}>0 & \text{in $Q_{T}$}\\
{\nu_i}_{\omega}\to 0 & \text{a.e. in $((0,T)\times B)\backslash Q_{T}.$}
\end{cases}\nonumber
 %\label{diff-p}
\end{equation}
Finally we modify the initial data for $P$, $Q$, $D$, $C$ and $W$ so that the following set of relations denoted by {\text {\bf (IC-p)}} read
\iffalse
\begin{equation}
P_{0}=P_{0,\omega,\e}=P_{0,\omega}, \ P_{0,\omega}\geq 0, \ P_{0,\omega}\not \equiv 0, \ P_{0,\omega}|_{\R^{3}\backslash\Omega_{0}}=0,\  \int_{B} P_{0,\omega}^{2}dx\leq c.
\label{Pinp}
\end{equation} 
\begin{equation}
Q_{0}=Q_{0,\omega,\e}=Q_{0,\omega}, \ Q_{0,\omega}\geq 0, \ Q_{0,\omega}\not \equiv  0, \quad Q_{0,\omega}|_{\R^{3}\backslash\Omega_{0}}=0,\  \int_{B} Q_{0,\omega}^{2}dx \leq c.
\label{Qinp}
\end{equation}
\begin{equation}
D_{0}=P_{0,\omega,\e}=D_{0,\omega}, \  D_{0,\omega}\geq 0, \ D_{0,\omega}\not \equiv  0, \quad D_{0,\omega}|_{\R^{3}\backslash\Omega_{0}}=0,\  \int_{B} D_{0,\omega}^{2}dx\leq c.
\label{Dinp}
\end{equation}
\begin{equation}
C_{0}=C_{0,\omega,\e}=C_{0,\omega}, \ C_{0,\omega}\geq 0, \ C_{0,\omega}\not\equiv 0, \quad C_{0,\omega}|_{\R^{3}\backslash\Omega_{0}}=0, \ \int_{B} C_{0,\omega}^{2}dx\leq c.
\label{Cinp}
\end{equation}
\begin{equation}
\vc{v}_{0}=\vc{v}_{0,\omega,\e}=\vc{v}_{0,\omega}, \ \vc{v}_{0,\omega}\geq 0, \ \vc{v}_{0,\omega}\not\equiv 0, \quad \vc{v}_{0,\omega}|_{\R^{3}\backslash\Omega_{0}}=0,\  \int_{B} \vc{v}_{0,\omega}^{2}dx\leq c.
\label{vinp}
\end{equation}
\fi

\begin{equation}
\begin{cases}
& \!\!\! P_{0}=P_{0,\omega,\e}=P_{0,\omega},\ P_{0,\omega}\geq 0,\ P_{0,\omega}\not \equiv 0,\ P_{0,\omega}|_{\R^{3}\backslash\Omega_{0}}=0,\  {\displaystyle \int_{B} P_{0,\omega}^{p}dx\leq c},\\ \\
& \!\!\! Q_{0}=Q_{0,\omega,\e}=Q_{0,\omega},\ Q_{0,\omega}\geq 0,\ Q_{0,\omega}\not \equiv  0,\ Q_{0,\omega}|_{\R^{3}\backslash\Omega_{0}}=0,\  {\displaystyle\int_{B} Q_{0,\omega}^{p}dx \leq c}, \\ \\
& \!\!\! D_{0}=P_{0,\omega,\e}=D_{0,\omega},\  D_{0,\omega}\geq 0,\ D_{0,\omega}\not \equiv  0,\ D_{0,\omega}|_{\R^{3}\backslash\Omega_{0}}=0,\  {\displaystyle\int_{B} D_{0,\omega}^{p}dx\leq c},\\ \\
& \!\!\! C_{0}=C_{0,\omega,\e}=C_{0,\omega},\ C_{0,\omega}\geq 0,\ C_{0,\omega}\not\equiv 0,\ C_{0,\omega}|_{\R^{3}\backslash\Omega_{0}}=0, \ {\displaystyle\int_{B} C_{0,\omega}^{p}dx\leq c},\\ \\
& \!\!\! W_{0}=W_{0,\omega,\e}=W_{0,\omega},\ W_{0,\omega}\geq 0,\ W_{0,\omega}\not\equiv 0,\ W_{0,\omega}|_{\R^{3}\backslash\Omega_{0}}=0, \ {\displaystyle\int_{B} W_{0,\omega}^{p}dx\leq c}, \nonumber
\end{cases}
\end{equation}
for all  $p\geq 1$.

The weak formulation of the penalized problem reads:

\begin{itemize}
\item The integral relations \eqref{w-Da} in Definition \eqref{D2.1} hold true
for any $\tau \in [0,T]$ and $x \in B$ and any test function $\varphi \in C_c^{\infty}([0,T] \times \mathbb{R}^3),$ and for 
$\vc{G_{P_{\omega, \e}}}, \vc{G_{Q_{\omega, \e}}}, \vc{G_{D_{\omega, \e}}}$ given in \eqref{Gp}-\eqref{Gd}, namely
\begin{equation} 
\left.
\begin{array}{l}
 \hspace{0.5cm}\displaystyle{\int_{B}  P_{\omega,\e} \varphi(\tau,\cdot) \, dx - \int_{\Omega_0}  P_0 \varphi(0,\cdot)dx=}\\
 \hspace{1.3cm}\displaystyle{
 \int_0^{\tau} \!\!\int_{B} \left( P_{\omega,\e} \partial_t \varphi + P_{\omega, \e} \vc{v}_{\omega, \e} \cdot \Grad_x \varphi + \vc{G_{P_{\omega, \e}}}  \varphi(t, \cdot) \right) dx dt}, \\ \\
 \hspace{0.5cm}\displaystyle{ \int_{B}  Q_{\omega, \e} \varphi(\tau,\cdot) \, dx - \int_{\Omega_0}  Q_0 \varphi(0,\cdot)dx} =\\
\hspace{1.3cm}\displaystyle{\int_0^{\tau} \!\!\int_{B} \left( Q_{\omega, \e} \partial_t \varphi + P_{\omega, \e} \vc{v}_{\omega, \e} \cdot \Grad_x \varphi + \vc{G_{Q_{\omega, \e}}} \varphi(t, \cdot) \right) dx dt},\\ \\
 \hspace{0.5cm} \displaystyle{ \int_{B} D_{\omega, \e} \varphi(\tau,\cdot) \, dx   - \int_{\Omega_0}  D_0 \varphi(0,\cdot)dx}= \\
\hspace{1.3cm}\displaystyle{\int_0^{\tau} \!\!\int_{B} \left( D_{\omega, \e} \partial_t \varphi + D_{\omega, \e} \vc{v}_{\omega, \e} \cdot \Grad_x \varphi + \vc{G_{D_{\omega, \e}}}  \varphi(t, \cdot) \right) dx dt }.
\end{array}
\right\}
\tag {\bf{Ip}}
 \label{w-D}
\end{equation}
%In the sequel we refer to this set of relations as {\text{\bf (Ip)}}.

\item The weak formulation for the penalized Brinkman's equation reads
\begin{equation}
\begin{split}
\int_{B}\sigma_{\omega, \e} \Div \vc{\varphi}dx
%\\
-  \int_B \big(\mu_{\omega} \Grad_x \vc{v}_{\omega, \e}  &: \Grad_x \vc{\varphi} + \frac{\mu_{\omega}}{K} \vc{v}_{\omega, \e} \vc{\varphi}   \big) dx  \\
%&+\int_{0}^{\tau}\!\!\int_{B} \sigma(P_{\omega,\e},Q_{\omega,\e},D_{\omega,\e})\Div_x \vc{\varphi} dxdt\\
+ \frac{1}{\varepsilon}  \int_{\Gamma_t} ((\vc{V} - \vc{v}_{\omega, \e}) &\cdot \vc{n} \vc{\varphi} \cdot \vc{n} ) dS_x =0
\end{split}
\label{w-p-mom}
\end{equation}
for any test function $\vc{\varphi} \in C_c^{\infty}(B; \mathbb{R}^3),$
where $\vc{v}_{\omega, \e} \in W_0^{1,2}(B; \mathbb{R}^3),$
and  $\vc{v}_{\omega, \e}$ satisfies the no-slip boundary condition 
\begin{equation}
\vc{v}_{\omega, \e}|_{\partial B} = 0\,\, \mbox{in the sense of traces}. \label{no-slip}
\end{equation} 

\item The weak formulation for $C_{\omega,\e}$ is as follows,
\[
\int_B C_{\omega, \e} \varphi(\tau,\cdot) \, dx - \int_{\Omega_0}  C_0 \varphi(0,\cdot)dx  = 
\]
\begin{equation}
\int_0^{\tau} \!\!\int_B  C_{\omega, \e} \partial_t \varphi dx dt   -
\int_{0}^{\tau} \!\!\int_B {\nu_1}_{\omega}  \Grad_x C_{\omega, \e} \cdot \Grad_x \varphi dx dt
\label{weakc}
\end{equation}
\[ 
- \int_0^{\tau} \!\!\int_B  \left(K_1 K_P C_{\omega,\e}P_{\omega,\e} + K_2 K_Q(\bar{C}-C_{\omega,\e})Q\right)  C_{\omega, \e}\varphi dx dt,
\]
for any test function $\varphi \in C_c^{\infty}([0,T] \times \mathbb{R}^3)$ and $C_{\omega,\e}$ satisfies the boundary conditions 
\begin{equation}
 C_{\omega, \e}|_{\partial B} = 0\,\, \mbox{in the sense of traces}. \label{neumann}
\end{equation} 
\item The weak formulation for $W_{\omega,\e}$ is as follows,
\[
\int_B W_ {\omega, \e} \varphi(\tau,\cdot) \, dx - \int_{\Omega_0}  W_0 \varphi(0,\cdot)dx  = 
\]
\begin{equation}
\int_0^{\tau} \!\!\int_B  W_{\omega, \e} \partial_t \varphi dx dt   -
\int_{0}^{\tau} \!\!\int_B {\nu_2}_{\omega}  \Grad_x W_{\omega, \e} \cdot \Grad_x \varphi dx dt
\label{weakw}
\end{equation}
\[ 
- \int_0^{\tau} \!\!\int_B  \left(\mu_1 G_1(W_{\omega,\e}) P_{\omega,\e} + \mu_2 G_2(W_{\omega,\e}) Q_{\omega,\e} \right) 
W_{\omega, \e}  \varphi dx dt, 
\]
for any test function $\varphi \in C_c^{\infty}([0,T] \times \mathbb{R}^3)$ and $W_{\omega,\e}$ satisfies the boundary conditions 
\begin{equation}
W_{\omega, \e}|_{\partial B} = 0\,\, \mbox{in the sense of traces}. \label{neumann}
\end{equation}
\end{itemize}

Here, $\varepsilon$ and $\omega$ are positive parameters. 
%The choice of no-slip boundary condition \eqref{no-slip} is not essential.

\section{Existence of Approximate Solutions within B}
\label{S4}
The construction of the approximate solutions $$(P_{\omega, \e}, Q_{\omega, \e}, D_{\omega, \e}, v_{\omega, \e}, C_{\omega, \e}, W_{\omega, \e})$$ within  the fixed reference domain $B$  relies
\begin{enumerate}
\item[--]  on the regularization of the three transport equations \eqref{dP}-\eqref{dD} with the aid of an artificial viscosity parameter $\eta$ transforming the three transport (hyperbolic) equations into parabolic partial differential equations, and 
\item[--]  on  the use of the so-called Faedo Garlerkin approximations on Brink-man's equation which involves replacing  \eqref{pressure2} by an integral relation. The approximation at this level involves a  parameter $n,$ denoting the dimension of the basis used in this process.
\end{enumerate}
Given the approximate velocity, and the nutrient and drug  concentrations  one solves  the three parabolic equations corresponding to \eqref{dP}-\eqref{dD} via a fixed point argument. Next, one solves the diffusion equations obtaining the nutrient and the drug concentrations.

The loop closes by performing a fixed point argument on the integral form of Brinkman's equation yielding the approximate velocity. The existence of the approximate solutions $\{P_{\omega, \e}, Q_{\omega, \e}, D_{\omega, \e}, v_{\omega, \e}, C_{\omega, \e}, D_{\omega, \e} \}$ within $B$ is established by letting $n \to \infty$ and $\eta \to 0$ in the spirit of the analysis in \cite{Donatelli-Trivisa-2006}.

We emphasize at this point that by adding the three parabolic equations of the approximate  cell densities corresponding to \eqref{dP}-\eqref{dD} one obtains a parabolic equation for the sum of cell densities $[P+Q+D]_{\{\omega, \e, \eta, n\}}.$ At this point we omit the indices  for simplicity in the presentation.

We recall that  in   $\Omega_{\tau}$, \eqref{density}  holds, namely  $P+Q+D= \varrho_{f}$.  A simple argument shows that this sum is constant within the fixed reference domain $B$  as well. At this level one can argue by contradiction, namely assume that
$$P+Q+D= R(t) \not = \varrho_{f}$$ and write the equation verified by $R(t)$ which is the following  linear parabolic equation
\begin{equation}
\partial_t R(t) + \vc{v} \nabla R(t) = \eta \Delta R(t) +  \frac{1}{\varrho_{f}} [K_B CP - K_RD] [\varrho_{f}-R(t)].
\label{R1}
\end{equation}
supplemented with the initial data 
\begin{equation}
R(0)=\varrho_{f}.
\label{R2}
\end{equation}
Applying Gronwall's inequality now yields uniqueness of solutions for \eqref{R1}-\eqref{R2}. Observing now, that $R(t)=\varrho_{f}$ is a solution of \eqref{R1}-\eqref{R2} leads to contradiction.

\section{Uniform bounds}\label{S5}
In this section we collect all the uniform bounds satisfied by the solutions of the penalization schemes defined in the Section \ref{S3}. Let us mention that we will denote by $c$ a constant that depends on the initial data \eqref{IC}, the boundary conditions \eqref{BC2}-\eqref{BC3}, $\vr_{f}$, $\|C_{\omega,\e}\|_{L^{\infty}_{t,x}}$, $\|W_{\omega,\e}\|_{L^{\infty}_{t,x}}$.
From the previous section we get that 
\begin{equation}
0\leq P_{\omega,\e},  Q_{\omega,\e},  D_{\omega,\e}\leq \vr_{f}\quad \text{in $[0,T]\times B$},
\label{u1}
\end{equation}
this entails that for any $p\geq 1$
\begin{equation}
P_{\omega,\e},\  Q_{\omega,\e},\  D_{\omega,\e} \quad \text {are uniformly bounded in $L^{p}([0,T]\times B)$}.
\label{u2}
\end{equation}
Since the nutrient $C_{\omega,\e}$  and the drug concentration satisfy a parabolic equation, by a standard application of  the maximum principle \cite{AS67}  we have that almost everywhere in $B\times (0,T)$
\begin{equation}
C_{\omega, \e}(x,t)\in L^{\infty}([0,T]\times B).
\label{mpC}
\end{equation}
\begin{equation}
W_{\omega, \e}(x,t)\in L^{\infty}([0,T]\times B).
\label{mpW}
\end{equation}
Now,  by multiplying \eqref{dC} by  $C_{\omega,\e}$,  by integrating by parts and by taking into account \eqref{u1}, \eqref{u2}, \eqref{mpC} we get that $C_{\omega,\e}$ satisfies the following energy estimate,
\begin{equation}
\frac{\partial}{\partial t}\int_{B}\frac{1}{2}C^{2}_{\omega, \e}dx + \int_{B} {\nu_{1}}_{\omega} |\Grad_{x}C_{\omega, \e}|^{2}dx\leq c\int_{B}C^{2}_{\omega, \e}dx, 
\label{u3}
\end{equation}
similarly, taking into account that $G_{1}$ and $G_{2}$ are smooth functions we have also
\begin{equation}
\frac{\partial}{\partial t}\int_{B}\frac{1}{2}W^{2}_{\omega, \e}dx +\int_{B} {\nu_{2}}_{\omega}|\Grad_{x}W_{\omega, \e}|^{2}dx\leq c\int_{B}W^{2}_{\omega, \e}dx.
\label{u3w}
\end{equation}
As a consequence of \eqref{mpC}, \eqref{mpW}, \eqref{u3}, \eqref{u3w} we get the following uniform bounds  with respect to $\varepsilon$, $\omega$.\\
\begin{equation}
\|C_{\omega,\e}\|_{L^{2}_{t}L^{2}_{x}}+ \| {\nu_{1}}_{\omega} \Grad C_{\omega,\e}\|_{L^{2}_{t}L^{2}_{x}}\leq c,
\label{bc}
\end{equation}\\
\begin{equation}
\|W_{\omega,\e}\|_{L^{2}_{t}L^{2}_{x}}+  \| {\nu_{2}}_{\omega} \Grad W_{\omega,\e}\|_{L^{2}_{t}L^{2}_{x}}\leq c,
\label{bw}
\end{equation}\\
where $L^{q}_{t}L^{p}_{x}$ stands for $L^{q}(0,T;L^{2}(B)\!)$.
By combining  \eqref{u2}, \eqref{mpC}  with \eqref{divcon} we have that
\begin{equation}
\Div\vc{v}_{\omega,\e}={\bf G}, \qquad \text{with ${\bf G}\in L^{\infty}(0,T;L^{p}(B))$,\quad $p>1$}.
\label{bdiv}
\end{equation}
Next, by applying regularity theory concerning the divergence  equation in Sobolev spaces (see Lemma 2.1.1 (a) in \cite{Sohr-2001} or Remark 3.19 in \cite{Novotny-Stras-2004}, for more details see also \cite{DT-MixedModel-2013}) we end up with 
\begin{equation}
 \|\nabla\vc{v}_{\omega, \e}\|_{L^{p}_{x}}\leq c\|{\bf G}\|_{L^{p}_{x}} \qquad p>1.
\label{bgradv}
\end{equation}
\iffalse
On the other hand by considering the equation \eqref{pressure2} in $B$, by taking into account \eqref{bdiv} and \eqref{bgradv} and by a standard application of elliptic regularity theory (see again \cite{DT-MixedModel-2013}) we get 
\begin{equation}
\|\sigma_{\omega,\e} \|_{L^{2}_{x}}\leq c,
\label{bpre}
\end{equation}
uniformly with respect to $\varepsilon$, $\omega$.\\
\fi
Since the vector field $\vc{V}$ vanishes on the boundary of the reference domain $B$ it may be used as a test function in the weak formulation of the Brinkman's equation  for the  penalized problem \eqref{w-p-mom}, namely

\begin{equation}
\begin{split}
\int_B \sigma_{\omega,\e}\Div\vc{V}dx&- \int_B \big(\mu_{\omega} \Grad_x \vc{v}_{\omega,\e}  : \Grad_x \vc{V} + \frac{\mu_{\omega}}{K} \vc{v}_{\omega,\e} \vc{V}  \big) dx \\
%&+\int_0^{\tau} \int_B \sigma(P_{\omega,\e},Q_{\omega,\e},D_{\omega,\e})\Div_x \vc{V} \\
&+ \frac{1}{\varepsilon}  \int_{\Gamma_t} ((\vc{V} - \vc{v}_{\omega,\e}) \cdot \vc{n} \vc{V} \cdot \vc{n} )dS_x =0.
\end{split}
 \label{w-p-momV}
\end{equation}
By combining standard computations with \eqref{w-p-momV}, the velocity field $\vc{v}_{\omega,\e}$ satisfies the following estimate,
 \begin{equation*}
 \begin{split}
\int_B  \mu_{\omega} ( |\Grad_x \vc{v}_{\omega,\e}|^2 + \frac{1}{K}|\vc{v}_{\omega,\e}|^{2})dx +\frac{1}{\varepsilon}\int_{\Gamma_t} |(\vc{v}_{\omega,\e}-\vc{V}) \cdot \vc{n}|^{2}dS \leq\\
\int_B \left(\mu_{\omega} \Grad_x \vc{v}_{\omega,\e}  : \Grad_x \vc{V} +\mu_{\omega} \vc{v}_{\omega,\e} \vc{V}  \right)dx+\int_B\sigma_{\omega,\e}\left( \Div\vc{v}_{\omega,\e}- \Div_x \vc{V} \right)dx. 
%\label{v-energy}
\end{split}
\end{equation*}

Since the vector field $\vc{V}$ is smooth  by means of \eqref{bdiv},   \eqref{w-p-momV}
 and by considering the weak formulation of  Brinkmann's equation for the penalized problem  \eqref{w-p-mom} in $B$ once more with a  special test function (for example by employing the multipliers technique of Lions \cite{Lions-1998} for the pressure) 
we get  the following uniform bounds with respect to $\e$, $\omega$.

\begin{equation}
\|\sigma_{\omega,\e} \|_{L^{\beta}_{x}}\leq c, \qquad 1<\beta\leq 2
\label{bpre}
\end{equation}

 \begin{equation}
\|\mu_{\omega} \vc{v}_{\omega,\e}\|_{L^{2}_{x}}+\|\mu_{\omega} \Grad\vc{v}_{\omega,\e}\|_{L^{2}_{x}}\leq c,
\label{bv}
\end{equation}\\
\begin{equation}
\int_{\Gamma_t} |(\vc{v}_{\omega,\e}-\vc{V}) \cdot \vc{n}|^{2}dS\leq c\varepsilon.
\label{bp}
\end{equation}

\section{Singular limits}\label{S6}
In this section we perform the limits of our two level penalization approximation. The first step is to keep $\omega$ fixed and let  $\varepsilon\to 0$. The main issue of this step is to get rid of the quantities that are supported by the  healthy tissue $B\backslash \Omega_{t}$. This will be done by means of the  Lemma \ref{FL} that we will prove in the section. The second and final step  is the vanishing viscosity limit $\omega\to 0$ that we  perform in Section \ref{Sv} and this completes the proof of our main result Theorem \ref{T2.2}. 
 \subsection{Vanishing penalization $\varepsilon \to 0$}
 As a consequence of the uniform bound \eqref{u2} and the equations \eqref{dP}, \eqref{dQ}, \eqref{dD}  we get that the weak solutions of our approximation system satisfy
\begin{equation}
\left.
\begin{array}{r}
P_{\omega,\e}\rightarrow P_{\omega}\\ \\
Q_{\omega,\e}\rightarrow Q_{\omega}\\ \\
D_{\omega,\e}\rightarrow D_{\omega}
\end{array}
\right\}\quad \text{in} \quad C_{\text{weak}}(0,T;L^{p}(B)),\ p\geq 1.
\label{cPQDC}
\end{equation}
From the bound \eqref{bc}, \eqref{bw} and \eqref{bv} we get
\begin{align}
C_{\omega,\e}\longrightarrow C_{\omega}
\qquad  &\text{weakly in $L^{2}(0,T;W^{1,2}_{0}(B))$}, \label{cc}\\
W_{\omega,\e}\longrightarrow W_{\omega}
\qquad &\text{weakly in $L^{2}(0,T;W^{1,2}_{0}(B))$}, 
\label{cw}\\
\vc{v}_{\omega,\e}\longrightarrow \vc{v}_{\omega} \qquad &\text{weakly in $W^{1,2}_{0}(B)$}, 
\label{cv}
\end{align}
while from \eqref{bp} we have that
\begin{equation}
(\vc{v}_{\omega, \e}-\vc{V}) \cdot \vc{n}(\tau, \cdot)\big|_{\Gamma_{\tau}}=0\quad \text{for a.a $\tau\in [0,T]$.}\nonumber
\label{cp}
\end{equation}
By combining together \eqref{u2}, \eqref{bv} and the compact embedding of $L^{2}(B)$ in $W^{-1,2}(B)$we get 
\begin{equation}
\hspace{-0.55cm}
\left.
\begin{array}{r}
P_{\omega,\e}\vc{v}_{\omega,\e}\rightarrow P_{\omega}\vc{v}_{\omega}\\ \\
Q_{\omega,\e}\vc{v}_{\omega,\e}\rightarrow Q_{\omega}\vc{v}_{\omega}\\ \\
D_{\omega,\e}\vc{v}_{\omega,\e}\rightarrow D_{\omega}\vc{v}_{\omega}
\end{array}
\right\}\  \text{weakly-($\ast$) in} \ L^{\infty}(0,T;L^{2q/q+2}(B)),\  2\leq q< 6.
\label{cPQDCv}
\end{equation}
Finally from the equations \eqref{dP}-\eqref{dD} it follows that
\begin{equation}
\hspace{-0.55cm}
\left.
\begin{array}{r}
P_{\omega,\e}\vc{v}_{\omega,\e}\rightarrow P_{\omega}\vc{v}_{\omega}\\ \\
Q_{\omega,\e}\vc{v}_{\omega,\e}\rightarrow Q_{\omega}\vc{v}_{\omega}\\ \\
D_{\omega,\e}\vc{v}_{\omega,\e}\rightarrow D_{\omega}\vc{v}_{\omega}
\end{array}
\right\}\quad\text{in} \ C_{\text{weak}}([T_{1},T_{2}];L^{2q/q+2}(B)),\  2\leq q< 6.
\label{cPQDCv2}
\end{equation}
Since the embedding of $W^{1,2}_{0}(B)$ in $L^{6}(B)$ is compact we have that
\begin{equation}
\vc{v}_{\omega,\e}\otimes \vc{v}_{\omega,\e}\rightarrow \vc{v}_{\omega}\otimes\vc{v}_{\omega}\quad \text{weakly in $L^{6q/6+q}(B)$ for any $2\leq q<6$.}
\nonumber
\end{equation}
Taking into account \eqref{u2}, \eqref{bc}, \eqref{bw} and, as before,  the compact embedding of $L^{2}(B)$ in $W^{-1,2}(B)$ we get
\begin{equation}
\hspace{-0.55cm}
\left.
\begin{array}{r}
P_{\omega,\e}C_{\omega,\e}\rightarrow P_{\omega}C_{\omega}\\ \\
Q_{\omega,\e}C_{\omega,\e}\rightarrow Q_{\omega}C_{\omega}\\ \\
D_{\omega,\e}C_{\omega,\e}\rightarrow D_{\omega}C_{\omega}\\ \\
P_{\omega,\e}W_{\omega,\e}\rightarrow P_{\omega}W_{\omega}\\ \\
Q_{\omega,\e}W_{\omega,\e}\rightarrow Q_{\omega}W_{\omega}
\end{array}
\right\}\ \text{weakly-($\ast$) in $L^{\infty}(0,T;L^{2q/q+2}(B)),$ $2\leq q < 6$.}
\label{cC}
\end{equation}
By using \eqref{bpre} and \eqref{cv} we  into the limit in the weak formulation \eqref{w-p-mom} of the Brinkman's equation we get 
\begin{equation}
\int_{B}\sigma_{\omega} \Div \vc{\varphi}dx
-  \int_B \big(\mu_{\omega} \Grad_x \vc{v}_{\omega}  : \Grad_x \vc{\varphi} + \frac{\mu_{\omega}}{K} \vc{v}_{\omega} \vc{\varphi}   \big) dx =0,
 \label{pmom}
\end{equation}
for any test function $\vc{\varphi} \in C_c^{\infty}(B; \mathbb{R}^3), \ \vc{\varphi}\cdot\vc{n}|_{B}=0.$

By using \eqref{bv}, \eqref{cPQDC}-\eqref{cC} we can pass to the limit in the weak formulations \eqref{w-D}, \eqref{weakc}, \eqref{weakw}  and we obtain 
\begin{equation}
\left.
\begin{array}{l}
\displaystyle{\int_{B}   P_{\omega} \varphi(\tau,\cdot)dx  - \int_{B}  P_0 \varphi(0,\cdot)  dx =}\\
\hspace{1.5cm}\displaystyle{ \int_0^{\tau}\! \!\int_{B} \left( P_{\omega} \partial_t \varphi + P_{\omega} \vc{v} \cdot \Grad_x \varphi + \vc{G_{P_{\omega}}}  \varphi \right) dx dt,} \\ \\
\displaystyle{\int_{B}  Q_{\omega} \varphi(\tau,\cdot)dx -  \int_B Q_0 \varphi(0,\cdot) dx =}\\ 
\hspace{1.5cm}\displaystyle{  \int_0^{\tau}\! \!\int_{B} \left( Q_{\omega} \partial_t \varphi +Q_{\omega}\vc{v} \cdot \Grad_x \varphi + \vc{G_{Q_{\omega}}}  \varphi\right) dx dt,}\\ \\
\displaystyle{\int_{B} D_{\omega} \varphi(\tau,\cdot) dx -  \int_B D_0 \varphi(0,\cdot) dx=} \\
\hspace{1.5cm}\displaystyle{\int_0^{\tau}\!\! \int_{B} \left( D_{\omega} \partial_t \varphi + D_{\omega} \vc{v} \cdot \Grad_x \varphi + \vc{G_{D_{\omega}}}  \varphi\right) dx dt.}
\end{array}
\right\}
\tag {\bf{IIp}}
 \label{p-D}
\end{equation}\\
\begin{equation}
\begin{split}
\int_B C_{\omega} \varphi(\tau,\cdot) \, dx &- \int_{\Omega_0}  C_0 \varphi(0,\cdot)dx  = \\
\int_0^{\tau} \!\!\int_B  C_{\omega} \partial_t \varphi dx dt   &-
\int_{0}^{\tau} \!\!\int_B {\nu_1}_{\omega}  \Grad_x C_{\omega} \cdot \Grad_x \varphi dx dt\\
- \int_0^{\tau} \!\!\int_B  \big(K_1 K_P C_{\omega}P_{\omega} &+ K_2 K_Q(\bar{C}-C_{\omega})Q\big)  C_{\omega}  \varphi \dx dt. 
\end{split}
\label{p-C}
\end{equation}\\

\begin{equation}
\begin{split}
\int_B W_{\omega} \varphi(\tau,\cdot) \, dx &- \int_{\Omega_0}  W_0 \varphi(0,\cdot)dx  = \\
\int_0^{\tau} \!\!\int_B  W_{\omega} \partial_t \varphi dx dt  & -
\int_{0}^{\tau} \!\!\int_B {\nu_1}_{\omega}  \Grad_x W_{\omega} \cdot \Grad_x \varphi dx dt
\\ 
- \int_0^{\tau} \!\!\int_B  \big(\mu_2 G_1(W_{\omega}) P_{\omega} &+ \mu_2 G_2(W_{\omega,\e}) Q_{\omega} \big) 
W_{\omega}  \varphi \ dx dt. 
\end{split}
\label{p-W}
\end{equation}

%In the sequel we refer to this set of relations as {\text{\bf (IIp)}}.

\subsubsection{Vanishing density terms  in the ``healthy tissue''}
The next step in the penalization limit is to get rid of the terms supported in the healthy tissue part $((0,T)\times B)\backslash Q_{T}$.  The main issue is to describe the evolution of the interface $\Gamma_{\tau}.$ To that effect we employ elements from the so-called {\em level set method}.
The {\em level set method} is a numerical method for tracking interfaces and shapes (cf.\ Osher and Fedwik \cite{OshFed03}). It turns out that the interface $\Gamma_\tau$ can be identified with a component of the  set 
$$\{ \Phi(\tau, \cdot)=0\},$$
while the set $B\setminus \Omega_{\tau}$ correspond to $\{\Phi(\tau,\cdot)>0\}$, with
 $\Phi=\Phi(t,x)$ denoting the  unique solution of the transport equation 
\begin{equation}
\partial_{t}\Phi+\Grad_{x}\Phi(t,x)\cdot\vc{V}=0,
\label{dd1}
\end{equation}
with initial data
$$\Phi_{0}(x)=
\begin{cases}
>0 & \text{for}\  x\in B\backslash\Omega_{0},\\
<0 & \text{for}\  x\in \Omega_{0}\cup(\R^{3}\backslash\overline{B}),
\end{cases}
\qquad \Grad_{x}\Phi_{0}\neq 0\  \text{on $\Gamma_{0}$}.$$
Finally,
\begin{equation}
\begin{split}
\Grad_{x}\Phi(\tau,x)&=\lambda(\tau,x)\vc{n}(x) \qquad \text{for any $x\in \Gamma_{\tau}$}\\
&\\
&\lambda(\tau,x)\geq 0 \qquad \text{for $\tau\in[0,T]$.}
\end{split}
\label{d1}
\end{equation}

First we deal with the nutrient $C_{\omega}$ and the drug concentration $W_{\omega}$.
and we prove that their are vanishing outside $\Omega_{\tau}$.

\begin{proposition}
Assume that  $C_{\omega}$, $W_{\omega}$  satisfy \eqref{p-C} and \eqref{p-W}, respectively
and that {\text {\bf (IC-p)}} holds, then
\begin{equation}
C_{\omega}(\tau, \cdot)|_{B\backslash\Omega_{\tau}}=0\qquad W_{\omega}(\tau, \cdot)|_{B\backslash\Omega_{\tau}}=0.
 \label{sdCW}
\end{equation}
\label{fondprop1}
\end{proposition}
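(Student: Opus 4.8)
The plan is to test the weak formulations \eqref{p-C} and \eqref{p-W} with suitable test functions adapted to the level set function $\Phi$ solving \eqref{dd1}, exploiting the fact that the diffusion coefficients ${\nu_i}_\omega$ are still strictly positive everywhere in $B$ (for fixed $\omega>0$) so that the equations are genuinely parabolic, while the source/reaction terms are sign-favourable. Concretely, let $H$ be a smooth, nondecreasing truncation with $H(s)=0$ for $s\le 0$ and $H'(s)>0$ for $s>0$, and use as test function something of the form $\varphi=H(\Phi(t,x))\,\psi(t)$ with $\psi\ge 0$; since by \eqref{dd1} $\partial_t[H(\Phi)]+\vc V\cdot\Grad_x[H(\Phi)]=0$, the region $\{\Phi>0\}$ is transported by the flow of $\vc V$, and on $\Gamma_\tau$ one has $\Grad_x\Phi=\lambda\vc n$ with $\lambda\ge 0$ by \eqref{d1}. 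First I would note that the initial data vanish on $B\setminus\Omega_0=\{\Phi_0>0\}$ by {\text {\bf (IC-p)}}, so the initial term drops out.

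The key step is an energy/entropy inequality for the portion of $C_\omega$ (resp. $W_\omega$) living in $\{\Phi>0\}$. Testing \eqref{p-C} with $C_\omega H'(\Phi)$ (formally; one regularizes and uses that $C_\omega\in L^2(0,T;W^{1,2}_0(B))$ and the equation holds in the renormalized sense) I would derive a differential inequality of the type
\begin{equation*}
\frac{d}{dt}\int_B \tfrac12 C_\omega^2\, H'(\Phi)\,dx + \int_B {\nu_1}_\omega\, H'(\Phi)\,|\Grad_x C_\omega|^2\,dx \le c\int_B C_\omega^2\, H'(\Phi)\,dx + (\text{boundary/flux terms}),
\end{equation*}
where the transport term $C_\omega\vc v\cdot\Grad_x C_\omega$ is absent because $C_\omega$ has no convective term in its equation, and the reaction term $-(K_1K_PC_\omega P_\omega+K_2K_Q(\bar C-C_\omega)Q)C_\omega^2 H'(\Phi)$ has the right sign (using $0\le C_\omega\le\bar C$, which follows from the maximum principle bounds \eqref{mpC} and $C_0\le\bar C$, together with nonnegativity \eqref{u1}). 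Because ${\nu_1}_\omega H'(\Phi)\,|\Grad_x C_\omega|^2\ge 0$ is dropped, one is left with $\frac{d}{dt}\,y(t)\le c\,y(t)$ with $y(0)=0$ for $y(t)=\int_B C_\omega^2 H'(\Phi)\,dx$, and Gronwall forces $y\equiv0$. Since $\{\Phi(\tau,\cdot)>0\}=B\setminus\Omega_\tau$ and $H'>0$ there, this gives $C_\omega(\tau,\cdot)|_{B\setminus\Omega_\tau}=0$; the argument for $W_\omega$ is identical, using that $G_1,G_2$ are smooth and $W_\omega\ge 0$ is bounded so that the reaction term is again controlled with the right sign up to a constant.

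The main obstacle I anticipate is handling the boundary contributions carefully: when integrating by parts in the diffusion term $\int_B {\nu_1}_\omega \Grad_x C_\omega\cdot\Grad_x(C_\omega H'(\Phi))\,dx$ the cross term $\int_B {\nu_1}_\omega C_\omega H''(\Phi)\,\Grad_x C_\omega\cdot\Grad_x\Phi\,dx$ does not have an obvious sign, and near $\Gamma_\tau$ one must use the structure $\Grad_x\Phi=\lambda\vc n$, $\lambda\ge 0$, together with the boundary condition $C_\omega|_{\partial B}=0$ and an approximation of $H$ by functions that are linear near $0$, to show this term is either nonnegative or absorbable into the Gronwall estimate; alternatively one can avoid $H''$ altogether by choosing $H$ convex and exploiting that the awkward term then combines with the diffusion square to a perfect square plus a lower-order piece. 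A secondary technical point is that the formal test function $C_\omega H'(\Phi)$ is not admissible in $C_c^\infty$, so the argument must be run on mollified equations (regularization à la DiPerna–Lions for the transport-diffusion structure) and then pass to the limit, which is routine given the uniform bounds \eqref{bc}, \eqref{bw} and the parabolic regularity already established.
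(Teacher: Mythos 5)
Your overall strategy---localize an energy estimate to $\{\Phi>0\}=B\setminus\Omega_\tau$ via a cutoff $H(\Phi)$ and run Gronwall from zero initial data---is not the paper's route, and the obstacle you flag at the end is in fact fatal rather than technical. The problematic term is exactly the diffusive flux across the interface: as $H'$ tends to the indicator of $\{\Phi>0\}$, the cross term $\int_B {\nu_1}_\omega\, C_\omega H''(\Phi)\,\Grad_x C_\omega\cdot\Grad_x\Phi\,dx$ concentrates to $\int_{\Gamma_\tau}{\nu_1}_\omega\,\lambda\, C_\omega\,\partial_{\vc{n}}C_\omega\,dS$, which has no sign and cannot be absorbed into the Gronwall constant: it is precisely the rate at which diffusion carries nutrient from the tumor into the healthy tissue. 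Choosing $H$ convex, or invoking $\Grad_x\Phi=\lambda\vc{n}$ with $\lambda\ge0$, does not fix this, because $\partial_{\vc{n}}C_\omega$ on $\Gamma_\tau$ is generically nonzero. Indeed, for fixed $\omega$ the coefficient ${\nu_1}_\omega$ is strictly positive on all of $B$, so if nothing forces $C_\omega$ to vanish on $\Gamma_\tau$ the conclusion is simply false: a uniformly parabolic equation on $B$ with nonnegative, nontrivial data supported in $\Omega_0$ has a solution that is strictly positive throughout $B$ for $t>0$. Some input of the Dirichlet condition $C_\omega|_{\Gamma_\tau}=0$ (i.e.\ \eqref{BC3} at the level of the $\omega$-approximation) is indispensable, and your proposal never supplies it. Note also that the level-set identity \eqref{dd1} buys you nothing here: it is designed to cancel the material derivative $\partial_t+\vc{v}\cdot\Grad_x$ against the impermeability condition $(\vc{v}-\vc{V})\cdot\vc{n}=0$, which is how the paper's Lemma \ref{FL} treats the transport equations for $P$, $Q$, $D$; the equations for $C$ and $W$ have no convective term, so there is no such cancellation to exploit.

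The paper's own proof is entirely different and much shorter: it observes that $C_\omega$ and $W_\omega$, restricted to $B\setminus\Omega_\tau$, are solutions of a parabolic problem with vanishing initial data (by \textbf{(IC-p)}) and vanishing boundary data (on $\partial B$ by construction, on $\Gamma_\tau$ by the imposed Dirichlet condition), and concludes by uniqueness. If you prefer an energy formulation, the correct move is to multiply the equation by $C_\omega$ and integrate over $B\setminus\Omega_\tau$ directly, using $C_\omega=0$ on the whole boundary $\partial B\cup\Gamma_\tau$ to annihilate the flux term before applying Gronwall; that is a rigorous version of the paper's one-line argument, but it still takes the boundary condition on $\Gamma_\tau$ as an input rather than deriving it from the weak formulation \eqref{p-C} alone.
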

\begin{proof}
 In order to prove \eqref{sdCW} it is enough to observe that $C_{\delta,\omega}$ and $W_{\delta,\omega}$   are solutions in $B\backslash\Omega_{\tau}$ of a parabolic equation with vanishing initial  and boundary data.
 \end{proof}
Now, thanks to the Proposition \ref{fondprop1}, the weak formulation \eqref{p-C} assumes the following form
\begin{equation}
\begin{split}
\int_{\Omega_{\tau}} C_{\omega} \varphi(\tau,\cdot) \, dx &- \int_{\Omega_0}  C_0 \varphi(0,\cdot)dx  = \\
\int_0^{\tau} \!\!\int_{\Omega_{\tau}} C_{\omega} \partial_t \varphi dx dt   &-
\int_{0}^{\tau} \!\!\int_{\Omega_{\tau}} {\nu_1}_{\omega}  \Grad_x C_{\omega} \cdot \Grad_x \varphi dx dt\\
- \int_0^{\tau} \!\!\int_{\Omega_{\tau}}  \big(K_1 K_P C_{\omega}P_{\omega} &+ K_2 K_Q(\bar{C}-C_{\omega})Q\big)  C_{\omega}  \varphi(\tau, \cdot) dx dt. 
\end{split}
\label{p-C2}
\end{equation}
while the drug concentration formulation \eqref{p-W} becomes
\begin{equation}
\begin{split}
\int_{\Omega_{\tau}} W_{\omega} \varphi(\tau,\cdot) \, dx &- \int_{\Omega_0}  W_0 \varphi(0,\cdot)dx  = \\
\int_0^{\tau} \!\!\int_{\Omega_{\tau}}  W_{\omega} \partial_t \varphi dx dt  & -
\int_{0}^{\tau} \!\!\int_{\Omega_{\tau}} {\nu_2}_{\omega}  \Grad_x W_{\omega} \cdot \Grad_x \varphi dx dt
\\ 
- \int_0^{\tau} \!\!\int_{\Omega_{\tau}}  \big(\mu_1 G_1(W_{\omega}) P_{\omega} &+ \mu_2 G_2(W_{\omega,\e}) Q_{\omega} \big) 
W_{\omega}  \varphi(\tau, \cdot) dx dt. 
\end{split}
\label{p-Ww}
\end{equation}
In order to prove that the proliferating, quescient and dead cells are vanishing in the healthy tissue we need to prove the following lemma.
\begin{lemma}
\label{FL}
Let $Z\in L^{\infty}(0,T;L^{2}(B))$, $Z\geq 0$, $\vc{v}\in W^{1,2}_{0}(B)$ satisfying the following equation
\begin{equation} 
\begin{split}
 \int_{B}  \big( Z \varphi(\tau,\cdot)&- Z_0 \varphi(0,\cdot)\big)dx\\
 & = \int_0^{\tau}\! \int_{B} \left( Z \partial_t \varphi + Z \vc{v} \cdot \Grad_x \varphi + (\vc{G}+\vc{G_Z} ) \varphi\right) dx dt,
\end{split}
\label{Zeq}
\end{equation}
for any $\tau\in[0,T]$ and any test function $\varphi\in C^{1}_{c}([0,T]\times \R^{3})$  and $\vc{G_Z}$ a linear function of $Z$, while $\vc{G}\in L^{\infty}([0,T]\times B)$,  $\vc{G}(\tau,\cdot)\big|_{B\backslash\Omega_{\tau}}=0$. Moreover assume that
\begin{equation}
(\vc{v}-\vc{V})(\tau, \cdot)\cdot\vc{n}\big|_{\Gamma_{\tau}}=0\qquad \text{a.e. $\tau\in (0,T)$}
\label{ueq}
\end{equation}
and that
$$Z_{0}\in L^{2}(\R^{3}), \qquad Z_{0}\geq 0\qquad Z_{0}\big|_{B\backslash\Omega_{0}}=0.$$
Then
\begin{equation}
Z(\tau, \cdot)\big|_{B\backslash\Omega_{\tau}}=0 \qquad \text{for any $\tau\in [0,T]$}.
\label{zeroZ}
\end{equation}
\end{lemma}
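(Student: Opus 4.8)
The plan is to prove the transport identity on the complementary (healthy-tissue) region by deriving a differential inequality for the $L^2$-mass of $Z$ outside the moving domain, using the level-set function $\Phi$ from \eqref{dd1}--\eqref{d1} as a smooth cutoff. Concretely, I would introduce a smooth, nondecreasing approximation $H_\delta$ of the Heaviside function with $H_\delta(s)=0$ for $s\le 0$, and test \eqref{Zeq} (after a standard mollification in time so that the equation holds pointwise in $\tau$ and $\partial_t$ may be applied classically) with $\varphi = H_\delta(\Phi)\,Z$, or rather with $\varphi = H_\delta(\Phi)$ against the renormalized equation for $Z^2/2$. Since $Z\ge 0$ and $\vc v\in W^{1,2}_0(B)$ with $\Div\vc v = \vc G \in L^\infty$, the transport equation \eqref{Zeq} can be renormalized (in the DiPerna--Lions sense) to give, for $\beta(Z)=Z^2/2$,
\begin{equation*}
\partial_t \beta(Z) + \Div(\beta(Z)\vc v) + \big(Z\beta'(Z)-\beta(Z)\big)\Div\vc v = \beta'(Z)\,(\vc G+\vc G_Z),
\end{equation*}
in the sense of distributions on $(0,T)\times B$.

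Next I would multiply this renormalized identity by $H_\delta(\Phi)$ and integrate over $B$. The key cancellation comes from the fact that $\Phi$ solves the transport equation \eqref{dd1} with the \emph{boundary velocity} $\vc V$: writing the convective term as $\int_B \Div(\beta(Z)\vc v)\,H_\delta(\Phi)\,dx = -\int_B \beta(Z)\,\vc v\cdot\Grad_x\Phi\, H_\delta'(\Phi)\,dx$ and comparing with $\partial_t\Phi = -\vc V\cdot\Grad_x\Phi$, the relevant quantity is $\int_B \beta(Z)(\vc v - \vc V)\cdot\Grad_x\Phi\,H_\delta'(\Phi)\,dx$. Now $H_\delta'(\Phi)$ is supported in a thin neighborhood of $\Gamma_\tau=\{\Phi(\tau,\cdot)=0\}$, where by \eqref{d1} $\Grad_x\Phi = \lambda\,\vc n$ with $\lambda\ge 0$; hence on that set $(\vc v-\vc V)\cdot\Grad_x\Phi = \lambda\,(\vc v-\vc V)\cdot\vc n$, which vanishes on $\Gamma_\tau$ by the impermeability hypothesis \eqref{ueq}. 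Sending $\delta\to 0$, the convective contribution near the interface disappears, and one is left with
\begin{equation*}
\frac{d}{dt}\int_{\{\Phi(\tau,\cdot)>0\}} \tfrac12 Z^2\,dx \;\le\; C\int_{\{\Phi(\tau,\cdot)>0\}} Z^2\,dx \;+\; \int_{\{\Phi(\tau,\cdot)>0\}} Z\,|\vc G|\,dx,
\end{equation*}
where $C$ bounds $\|\Div\vc v\|_{L^\infty}$ and the linear growth constant of $\vc G_Z$. Since $\vc G(\tau,\cdot)$ vanishes on $B\setminus\Omega_\tau = \{\Phi(\tau,\cdot)>0\}$, the last term is zero, and with $Z_0|_{B\setminus\Omega_0}=0$ the initial mass vanishes; Gronwall's inequality then forces $\int_{\{\Phi(\tau,\cdot)>0\}} Z^2\,dx = 0$ for all $\tau\in[0,T]$, which is \eqref{zeroZ}.

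The main obstacle is making the cutoff-against-the-interface argument rigorous: one must justify the renormalization of the transport equation with the merely $W^{1,2}_0$ velocity field (legitimate here by DiPerna--Lions since $\vc v$ and $\Div\vc v$ are sufficiently integrable, and $Z$ is bounded in $L^\infty_t L^2_x$), control the trace of $(\vc v-\vc V)\cdot\vc n$ on the level sets $\{\Phi=s\}$ for small $s$ (using the coarea formula together with $\Grad_x\Phi\ne 0$ near $\Gamma_\tau$, which follows from $\Grad_x\Phi_0\ne 0$ on $\Gamma_0$ propagated by the smooth flow of $\vc V$), and pass to the limit $\delta\to 0$ in the layer integrals. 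A secondary technical point is the time-mollification needed to give pointwise-in-$\tau$ meaning to the energy identity; this is handled by the usual Friedrichs commutator argument, whose error terms vanish because $Z\vc v\in L^\infty_t L^{q}_x$ for some $q>1$. Once these points are in place the Gronwall step is immediate.
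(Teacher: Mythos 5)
Your strategy is recognizably the same as the paper's — cut off with a regularized indicator of $\{\Phi>0\}$, use $\partial_t\Phi+\vc{V}\cdot\Grad_x\Phi=0$ to reduce the transport terms to $(\vc{v}-\vc{V})\cdot\Grad_x\Phi$, invoke \eqref{ueq} and \eqref{d1} to see that this quantity vanishes on $\Gamma_\tau$, and finish with Gronwall. But there are two genuine problems. First, the step you yourself flag as ``the main obstacle'' — showing that the interface-layer contribution actually disappears as the cutoff sharpens — is the entire content of the lemma, and you do not carry it out; knowing that $(\vc{v}-\vc{V})\cdot\vc{n}$ vanishes \emph{on} $\Gamma_\tau$ does not by itself control $\frac{1}{\eta}\int_{\{0\le\Phi\le\eta\}} Z\,(\vc{v}-\vc{V})\cdot\Grad_x\Phi$, because the prefactor $1/\eta$ exactly balances the shrinking width of the layer. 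The paper closes this quantitatively: $(\vc{v}-\vc{V})\cdot\Grad_x\Phi$ lies in $W^{1,2}_{0}(B\setminus\Omega_\tau)$ (zero trace on $\Gamma_\tau$ by \eqref{ueq}--\eqref{d1} and on $\partial B$ since $\vc{v},\vc{V}$ vanish there), so Hardy's inequality gives $\delta^{-1}(\vc{v}-\vc{V})\cdot\Grad_x\Phi\in L^2$ with $\delta$ the distance to the boundary; since $\delta/\eta\le c$ on $\{0\le\Phi\le\eta\}$, the layer integral is bounded by an $L^2$ function integrated over a set of vanishing measure and tends to $0$. Some such quantitative trace/Hardy estimate is indispensable; a coarea-formula gesture is not a substitute.

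Second, routing the argument through the DiPerna--Lions renormalized equation for $Z^2/2$ imports hypotheses the lemma does not grant. With only $Z\in L^{\infty}(0,T;L^{2}(B))$ and $\vc{v}\in W^{1,2}_{0}(B)$, the flux $\tfrac12 Z^{2}\vc{v}$ need not be locally integrable (you would need roughly $Z\in L^{12/5}$ against $\vc{v}\in L^{6}$), and your Gronwall constant ``bounding $\|\Div\vc{v}\|_{L^\infty}$'' does not exist under the stated assumptions — the lemma nowhere asserts $\Div\vc{v}=\vc{G}$ or $\Div\vc{v}\in L^\infty$; you are borrowing \eqref{divcon}--\eqref{bdiv} from the surrounding system. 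The paper avoids all of this by working with the \emph{linear} quantity $\int_{B}Z\varphi\,dx$ directly in the weak formulation \eqref{Zeq} with the Lipschitz test function $\varphi=[\min\{\Phi/\eta,1\}]^{+}$: no renormalization, no chain rule, no separate $\Div\vc{v}$ term, and the linearity of $\vc{G_Z}$ in $Z$ together with $Z\ge 0$ is exactly what Gronwall needs for $\int_{B\setminus\Omega_\tau}Z\,dx$. I would recommend abandoning the $Z^2$ energy in favor of the linear formulation, and supplying the Hardy-inequality estimate for the layer term.
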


\begin{proof}
 In the  proof  it is crucial the  construction of an appropriate test function to be used in the weak formulation of  \eqref{Zeq}. 
For given $\eta>0$ we use  
\begin{equation}
\varphi=\left[\min\left\{\frac{1}{\eta}\Phi;1\right\}\right]^{+}
\label{specialtest}
\end{equation}
and  we  obtain
\begin{equation} 
\begin{split}
 \int_{B\setminus \Omega_{\tau}} \!\!Z \varphi \, dx = &\frac{1}{\eta}\int_0^{\tau}\! \int_{\{0\leq \Phi(t,x)\leq \eta\}} \left( Z \partial_t \Phi + Z \vc{v} \cdot \Grad_x \Phi + (\vc{G}+\vc{G_Z} ) \Phi\right) dx dt\\
&+ \int_0^{\tau}\! \int_{\{\Phi(t,x)> \eta\}} \vc{G_Z} dxdt.
\end{split}
\label{Zeq2}
\end{equation}
We have that
$$Z \partial_t \Phi + Z \vc{v} \cdot \Grad_x \Phi=Z(\partial_t \Phi +\vc{v} \cdot \Grad_x \Phi)=Z(\vc{v}-\vc{V}) \cdot \Grad_x \Phi$$
where by \eqref{d1} and \eqref{ueq}   we get
\begin{equation}
(\vc{v}-\vc{V}) \cdot \Grad_x \Phi\in W^{1,2}_{0}(B\backslash\Omega_{\tau}) \quad \text {for a.e. $t\in (0,\tau)$.}
\label{d2}
\end{equation}
We introduce now the following distance function
\begin{equation}
\delta(t,x)=dist_{\R^{3}}[x, \partial(B\backslash\Omega_{\tau})] \qquad \text{for $t\in [0,\tau]$, $x\in B\backslash\Omega_{\tau}$}.
\label{dist}
\end{equation}
From \eqref{d2} and an application of Hardy's inequality (see Theorem 21.5 in \cite{OpFed90}) it follows  that
\begin{equation}
\frac{1}{\delta}(\vc{V}-\vc{v}) \cdot \Grad_x \Phi \in L^{2}([0,\tau]\times B\backslash\Omega_{\tau} ).
\label{Hardy}
\end{equation}
On the other hand by taking into account that  $\vc{G}$ is bounded and that $\vc{G_{Z}}$ is a linear function of $Z$ and  $Z\in L^{\infty}(0,T;L^{2}(B))$ and that \eqref{dist} is defined in $\mathbb{R}^{3}$ we have also 
\begin{equation}
\frac{\vc{V}+\vc{G_{Z}}}{\sqrt{\delta}}\in L^{1}([0,\tau]\times B\backslash\Omega_{\tau} ).
\label{Hardy2}
\end{equation}
Since $\vc{V}$ is regular we have that 
\begin{equation}
\frac{\delta(t,x)}{\eta}\leq c,\qquad  \frac{\sqrt{\delta(t,x)}}{\eta}\leq c
\qquad\text{when $0\leq \Phi(t,x)\leq \eta$}.
\label{dist2}
\end{equation}
Going back to \eqref{Zeq2} we get
\begin{equation}
\begin{split}
& \int_{B\backslash\Omega_{\tau}} Z\varphi dx\leq \frac{1}{\eta}\int_0^{\tau}\! \int_{\{0\leq \Phi(t,x)\leq \eta\}} \delta \frac{Z(\vc{v}-\vc{V}) \cdot \Grad_x \Phi}{\delta}dx dt\\ \\
&+\frac{1}{\eta}\int_0^{\tau}\! \!\int_{\{0\leq \Phi(t,x)\leq \eta\}}
\sqrt{\delta}\frac{\vc{G}+\vc{G_{Z}}}{\sqrt{\delta}}\Phi dxdt,
+\int_0^{\tau}\! \!\int_{B\backslash\Omega_{t}} \vc{G_Z} dxdt
\end{split}
\label{Zeq3}
\end{equation}
and letting $\eta\to 0$ in \eqref{Zeq3} and by taking into account \eqref{specialtest}, \eqref{Hardy}, \eqref{Hardy2},  and  that $\vc{G_{Z}}$ is a linear function of $Z$ and  $Z\in L^{\infty}(0,T;L^{2}(B))$, by applying Gronwall's inequality we conclude with
$$ \int_{B\backslash\Omega_{\tau}} Zdx=0.$$
Therefore by using the fact that $Z\geq 0$  and $Z\in L^{\infty}(0,T;L^{2}(B))$ we end up with \eqref{zeroZ}.
\end{proof}
By means of the previous lemma we are able to prove now that the proliferating, quiescent, dead cells a are vanishing in the healthy tissue. 
\begin{proposition}
Assume that $P_{\omega}$, $Q_{\omega}$, $D_{\omega}$ and $C_{\omega}$ satisfy
\text{\bf (IIp)} and that {\text {\bf (IC-p)}} holds, then
\begin{equation}
 P_{\omega}(\tau, \cdot)|_{B\backslash\Omega_{\tau}}=0, \quad  Q_{\omega}(\tau, \cdot)|_{B\backslash\Omega_{\tau}}=0, \quad  D_{\omega}(\tau, \cdot)|_{B\backslash\Omega_{\tau}}=0.
 \label{sdPQD}
\end{equation}
\label{fondprop}
\end{proposition}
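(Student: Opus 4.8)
The plan is to derive each identity in \eqref{sdPQD} from Lemma \ref{FL}, applied successively to $Z=P_\omega$, then $Z=Q_\omega$, then $Z=D_\omega$. All the structural hypotheses of that lemma are already available from Sections \ref{S5}--\ref{S6}: by \eqref{u1} and \eqref{cPQDC} one has $0\le P_\omega,Q_\omega,D_\omega\le\vr_f$ in $[0,T]\times B$, so these functions are nonnegative and lie in $L^\infty(0,T;L^2(B))$; by \eqref{cv}, $\vc{v}_\omega\in W^{1,2}_0(B)$; by \eqref{bp} together with \eqref{cv} the impermeability relation $(\vc{v}_\omega-\vc{V})(\tau,\cdot)\cdot\vc{n}|_{\Gamma_\tau}=0$ holds for a.a.\ $\tau\in(0,T)$; and by \textbf{(IC-p)} the initial densities vanish on $B\backslash\Omega_0$. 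What is \emph{not} immediate is that, for each of the three transport equations \textbf{(IIp)}, the source term decomposes as ``(linear in the unknown, with $L^\infty$ coefficient) $+$ (an $L^\infty$ term supported in $\overline{\Omega_\tau}$)'', which is exactly the structure Lemma \ref{FL} requires; arranging this dictates the order of the argument.

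I would start with the equation for $P_\omega$. Using \eqref{Gp}, write $\vc{G_{P_\omega}}=\vc{G_Z}+\vc{G}$ in the notation of Lemma \ref{FL}, with
\[
\vc{G_Z}=\big(K_BC_\omega-(K_Q+K_A)(\bar C-C_\omega)-i_1G_1(W_\omega)\big)P_\omega,\qquad \vc{G}=K_PC_\omega Q_\omega .
\]
Since $C_\omega,W_\omega\in L^\infty([0,T]\times B)$ by the maximum principle (cf.\ \eqref{mpC}, \eqref{mpW}) and $G_1$ is smooth, $\vc{G_Z}$ is linear in $P_\omega$ with bounded coefficient, while $\vc{G}$ is bounded because $C_\omega$ and $Q_\omega$ are, and $\vc{G}(\tau,\cdot)|_{B\backslash\Omega_\tau}=0$ by Proposition \ref{fondprop1}, as $C_\omega$ vanishes there. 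Lemma \ref{FL} then yields $P_\omega(\tau,\cdot)|_{B\backslash\Omega_\tau}=0$. Next, for the equation for $Q_\omega$ I use \eqref{Gq} and take
\[
\vc{G_Z}=-\big(K_PC_\omega+K_D(\bar C-C_\omega)+i_2G_2(W_\omega)\big)Q_\omega,\qquad \vc{G}=K_Q(\bar C-C_\omega)P_\omega ;
\]
here $\vc{G}$ is bounded and, thanks to the step just completed, vanishes on $B\backslash\Omega_\tau$ because $P_\omega$ does, so a second application of Lemma \ref{FL} gives $Q_\omega(\tau,\cdot)|_{B\backslash\Omega_\tau}=0$. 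Finally, for $D_\omega$ I use \eqref{Gd} with $\vc{G_Z}=-K_RD_\omega$ and
\[
\vc{G}=K_A(\bar C-C_\omega)P_\omega+K_D(\bar C-C_\omega)Q_\omega+i_1G_1(W_\omega)P_\omega+i_2G_2(W_\omega)Q_\omega ,
\]
which is bounded and, since both $P_\omega$ and $Q_\omega$ now vanish outside $\Omega_\tau$, is supported in $\overline{\Omega_\tau}$; a last application of Lemma \ref{FL} produces $D_\omega(\tau,\cdot)|_{B\backslash\Omega_\tau}=0$, which is \eqref{sdPQD}.

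The main difficulty here is not analytic but structural: because the source terms $\vc{G_P},\vc{G_Q},\vc{G_D}$ couple the three cell densities to one another and to $C_\omega,W_\omega$, Lemma \ref{FL} cannot be applied to the three equations independently --- each application needs the inhomogeneous part of the corresponding equation to be already known to be supported in the tumor region. This forces the chain $\{C_\omega,W_\omega\}$ (via Proposition \ref{fondprop1}) $\to P_\omega\to Q_\omega\to D_\omega$, and it relies crucially on the uniform $L^\infty$ bound \eqref{u1} on the cell densities to guarantee that the remainder terms $\vc{G}$ belong to $L^\infty([0,T]\times B)$, as Lemma \ref{FL} demands.
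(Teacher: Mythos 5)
Your proposal is correct and follows essentially the same route as the paper: both apply Lemma \ref{FL} with the identical splitting $\vc{G_Z}=\big(K_BC_\omega-(K_Q+K_A)(\bar C-C_\omega)-i_1G_1(W_\omega)\big)P_\omega$, $\vc{G}=K_PC_\omega Q_\omega$ for $P_\omega$, using Proposition \ref{fondprop1} to kill $\vc{G}$ outside $\Omega_\tau$, and then iterate. Your write-up is in fact more explicit than the paper's (which dispatches $Q_\omega$ and $D_\omega$ with ``the same type of arguments''), and you correctly identify that the coupling of the sources forces the order $\{C_\omega,W_\omega\}\to P_\omega\to Q_\omega\to D_\omega$.
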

\begin{proof}
We start the proof with $P_{\omega}$. Byusing the Proposition \ref{sdCW} and the uniform bounds of the Section \ref{S5} we see  $P_{\omega}$ verifies the hypotheses of the Lemma \ref{FL} if we take 
$$\vc{G}=K_{p}C_{\omega}Q_{\omega}$$ and 
$$\vc{G_{z}}=\left(K_B C_{\omega} - K_Q (\bar C- C_{\omega}) - K_A(\bar C - C_{\omega})\right) P_{\omega}- i_1 G_1(W_{\omega}) P_{\omega},$$ 
so we have that  $P_{\omega}(\tau, \cdot)|_{B\backslash\Omega_{\tau}}=0$. Having obtained the result for $P_{\omega}$, the remaing part of the proof follows with the same type of arguments applied to $Q_{\omega}$ and $D_{\omega}$.
\end{proof}
Now, taking into account the Proposition \ref{fondprop}, $P_{\omega}$, $Q_{\omega}$, $D_{\omega}$ satisfy the weak formulation {\bf (I)}  as $\e\to 0$.
\subsection{Vanishing viscosity limit $\omega \to 0$}
\label{Sv}
The last step in the proof is to perform the limit $\omega \to 0$ in order to get rid of the last viscosity terms of \eqref{pmom} in $B\backslash\Omega_{t}$. By using \eqref{bv} we have that
\begin{equation}
\begin{split}
\int_{\Omega_{t}}\mu\left(|\nabla_{x}\vc{v}_{\omega}|^{2}+|\vc{v}_{\omega}|^{2}\right)dx\leq c\\
\int_{B\backslash\Omega_{t}}\mu_{\omega}\left(|\nabla_{x}\vc{v}_{\omega}|^{2}+|\vc{v}_{\omega}|^{2}\right)dx\leq c,
\end{split}
\label{visc2}
\end{equation}
The estimates  \eqref{visc2} with a standard computations yields that
\begin{equation}
\int_{B\backslash\Omega_{t}}\mu_{\omega}\left(\Grad_x \vc{v}_{\omega}  : \Grad_x \vc{\varphi}  +\vc{v}_{\omega,\e} \vc{\varphi}\right)dx\to 0 
\quad \text{as $\omega\to 0$}.
\label{visc5}
\end{equation}
By combining   \eqref{pmom} with \eqref{visc5}    we get 
$$\int_{B\backslash\Omega_{t}}\sigma_{\omega}\Div \vc{\varphi} dxdt=0,$$
for any text function $\varphi$.
Now in the same spirit of \cite{Donatelli-Trivisa-2008} we can let $\omega\to 0$ in the weak formulations   \eqref{pmom}, \eqref{p-C2}, \eqref{p-Ww} and we complete the proof of Theorem  \ref{T2.2}.

\section{Acknowlegments}
The work of D.D. was supported  by the Ministry of Education, University and Research (MIUR), Italy under the grant PRIN 2012- Project N. 2012L5WXHJ, \emph{Nonlinear Hyperbolic Partial Differential Equations, Dispersive and Transport Equations: theoretical and applicative aspects}. Ê K.T.  gratefully acknowledges the support  in part by the National Science Foundation under the grant DMS-1211519 and by the Simons Foundation under the Simons Fellows in Mathematics Award 267399. 
Part of this research was performed during the visit of K.T. at University of L'Aquila which was supported under the grant PRIN 2012- Project N. 2012L5WXHJ, \emph{Nonlinear Hyperbolic Partial Differential Equations, Dispersive and Transport Equations: theoretical and applicative aspects}.
This work was completed while K.T. was resident at \'{E}cole Normale Sup\'{e}rieure de  Cachan as a Simons Fellow. K.T. is grateful to C.\ Bardos, L.\ Desvilettes and the CMLA Lab for providing a very stimulating environment for scientific research  and to C.\ Villani and  the Institute Henri Poincar\'{e} for the hospitality.


\begin{thebibliography}{10}

\bibitem{AS67}
D. G.\ Aronson, and J.~Serrin, 
  Local behavior of solutions of quasilinear parabolic equations, {\em Arch. Rational Mech. Anal.},
{\bf 25}, (1967), 81--122.
     
\bibitem{CareyKrishnan-1982} G.\ Carey and R.\ Krishnan, Penalty approximation of Stokes flow, Parts I \& II, {\em Comput. Methods Appl. Mech. Engrg.} {\bf 35}, (1982), 169-206.

\bibitem{CareyKrishnan-1984} G.\ Carey and R.\ Krishnan, Penalty finite element method for the Navier-Stokes equations, Parts I \& II, {\em Comput. Methods Appl. Mech. Engrg.} {\bf 42}, (1984) 183-224.

\bibitem{CareyKrishnan-1985} G.\ Carey and R.\ Krishnan,  Continuation techniques for a penalty approximation of the Navier-Stokes equations, {\em Comput. Methods Appl. Mech. Engrg.} {\bf 48}, (1985) 265-282.

\bibitem{ChenFriedman-2013}  D.\ Chen and A.\ Friedman, A two-phase free boundary problem with discontinuous velocity: Applications to tumor model, {\em J. Math. Anal. Appl.} {\bf 399}, (2013), 378-393.

\bibitem{Courant-1956} R.\ Courant, {\em Calculus of Variation and Supplementary Notes and Exercises}, New York University, New York, NY, 1956.


\bibitem{Donatelli-Trivisa-2006}
D.\ Donatelli, K.\ Trivisa, On the motion of a viscous compressible radiative-reacting gas.{\em  Comm. in Math. Phys.}, {\bf 265}, (2006), no. 2, 463-491.

\bibitem{Donatelli-Trivisa-2008}
D.\ Donatelli, K.\ Trivisa, From the dynamics of gaseous stars to the incompressible Euler equations. {\em J. Differential Equations}, {\bf 245}, (2008) 1356-1385-606.




\bibitem{DT-MixedModel-2013} D.\ Donatelli, K.\ Trivisa, On a nonlinear model for tumor growth: Global in time weak solutions, {\em J. Math. Fluid Mech.}, {\bf 16}, (2014), 787--803.

%\bibitem{Feireisl-2004}
%E.\ Feiresl, On the motion of a viscous, compressible, and heat conducting fluid. {\em Indiana Univ. Math. J.}, {\bf 53}, {\bf no. 4} (2004), 1705--1738.
%
\bibitem{FeireislNS-2011} 
E.\ Feiresl, J.\ Neustupa, J.\ Stebel, Convergence of a Brinkman-type penalization for compressible fluid flows. {\em J. Differential Equations}, {\bf 250},  no.1, (2011) 596-606.

\bibitem{FeireislKNNS-2013} 
E.\ Feireisl, O. \ Kreml, S.\ Necasova, J. Neustupa, J. Stebel, Weak solutions to the barotropic Navier-Stokes system with slip boundary conditions in time dependent domains, {\em J.\ Differential Equations} {\bf 254}, (2013) 125-140. 

\bibitem{Friedman-2004}	
A.\ Friedman, A hierarchy of cancer models and their mathematical challenges, {\em Discrete and Continuous Dynamical Systems}, {\bf 4},  no. 1 (2004) 147-159.

\bibitem{ChenFriedman-2013} 
D.\ Chen and A.\ Friedman, A two-phase free boundary problem with discontinuous velocity: Applications to tumor model, {\em J. Math. Anal. Appl.} {\bf 399} (2013) 378-393.

%\bibitem{Ladyzhenskaja} 
%O.A. Ladyzhenskaja An initial-boundary value problem for the Navier-Stokes equations in domains with boundary changing in time, {\em Zap. Naucn. Sem. Lenigrad. Otdel. Mat. Inst. Steklov.} (LOMI) {\bf 11} (1968) 97-128.	

\bibitem{Lions-1998} 
P.-L. Lions, {\em Mathematical topics in Fluid Dynamics,} {\bf Vol. 2} {\em Compressible models,} Oxford Science Publication, Oxford, 1998.

\bibitem{Novotny-Stras-2004} A.~Novotny, I.~Straskraba, {Introduction to the Mathematical Theory of Compressible Flow}, Oxford Science Publication, Oxford, 2004.


\bibitem{OpFed90}
B.~Opic, A.~Kufner, {\em Hardy-type inequalities}. Longman, Pitchman Research Notes in Math. {\bf 219}, Essex, 1990.

\bibitem{OshFed03}
S. Osher, R. Fedwik, Level Set Methods and Dynamic Implicit Surfaces, {\em Appl. Math. Sci.}, {\bf 153}, Springer- Verlag, New York, 2003.

\bibitem{Roda-etal-2011} 
J. M. \ Roda, L.A.\ Summer, R.\  Evans, G.S.\  Philips, C.B. Marsh and  T.D. \ Eubank, Hypoxia inducible factor-2 regulates GM-CSF-derived soluble vascular endothelial growth factor receptor 1 production from macrophages and inhibits tumor growth and angiogenesis. {\em J. Immunol.,} {\bf 187}, (2011), 1970--1976.

\bibitem{Roda-etal-2012} 
J. \ Roda, Y.\  Wang, L.\ Sumner, G.\ Phillips, T.\  Eubank, and C. \ Marsh,  Stabilization of HIF-2
induces SVEGFR-1 production from Tumor-associated macrophages and enhances the Anti-tumor
effects of GM-CSF in murine melanoma model. {\em J. Immunol.,} {\bf 189}, (2012), 3168--3177.

%\bibitem{RCM-2007} T.\ Roose, S.J.\ Chapman and P.\  Maini, Mathematical Models of
%Avascular Tumor Growth. {\em SIAM REVIEW}  {\bf 49} {\bf no. 2} (2007) 179Ð208.
%
\bibitem{Sohr-2001}
H.\ Sohr, {\em The Navier Stokes Equations, An Elementary Functional Analytical Approach}, Birkh\'auser Verlag, Basel, 2001.


\bibitem{StokesCarey-2011} 
Y.\ Stokes and G.\ Carey,  On generalized penalty approaches for slip surface and related boundary conditions in viscous flow simulation, {\em Internat. J. Numer. Methods Heat Fluid Flow}, {\bf 21} (2011) 668-702.

\bibitem{WardKing-1997} J.P.\ Ward and J.R.\ King, Mathematical modeling of avascular amour growth I, {\em IMA J.\ Math.\ Appl.\ Med.\ Biol.\ } {\bf 14}, no. 1 (1997), 39--69.

\bibitem{WardKing-1999}  J.P.\ Ward and J.R.\ King, Mathematical modeling of avascular amour growth II: Modelling growth saturation, {\em IMA J.\ Math.\ Appl.\ Med.\ Biol.\ } {\bf 16},  (1999), 171-211.

\bibitem{WardKing-2003}  J.P.\ Ward and J.R.\ King, Mathematical modeling of drug transport model in tumor multicell spheroids, {\em Math. Biosci.}, {\bf 181} (2003), 177-207.

\bibitem{Zhao-2010} 
J.-H.\ Zhao, A parabolic-hyperbolic free boundary problem modeling tumor growth with drug application,  {\em Electronic Journal of Differential Equations,} {\bf 2010}, (2010) 1--18.
\end{thebibliography}
\end{document}